\documentclass[pre, amsmath, floatfix]{revtex4-2}

\usepackage{graphicx}
\usepackage{mathtools}
\usepackage{amsthm}

\theoremstyle{plain}
\newtheorem{theorem}{Theorem}[section]
\newtheorem{lemma}{Lemma}[section]
\newtheorem{proposition}{Proposition}[section]
\newtheorem{corollary}{Corollary}[section]

\theoremstyle{definition}
\newtheorem{definition}{Definition}[section]
\newtheorem{example}{Example}[section]

\theoremstyle{remark}
\newtheorem{remark}{Remark}[section]

\newcommand{\Prob}[1]{\mathrm{Pr}\left(#1\right)} 
\newcommand{\E}[1]{E\left[#1\right]} 
\newcommand{\Var}[1]{V\left[#1\right]} 
\DeclareMathOperator{\cov}{Cov} 
\newcommand{\Cov}[2]{\cov\left[#1, #2\right]}

\newcommand{\GaussFsymbol}{{}_2F_1} 
\newcommand{\GaussF}[4]{\GaussFsymbol\left(#1, #2; #3; #4\right)} 

\newcommand{\Omicron}{O}

\begin{document}
\title{Analysis of pairs in a generalized deck of playing cards}

\author{Ken Yamamoto}
\affiliation{Faculty of Science, University of the Ryukyus, Nishihara, Okinawa 903-0213, Japan}
\author{Satoshi Umeki}
\affiliation{Graduate School of Engineering and Science, University of the Ryukyus, Nishihara, Okinawa 903-0213, Japan}

\begin{abstract}
In this study, we analyze the number of pairs (i.e., two cards of the same rank) in a set of cards randomly selected from a deck of playing cards.
While the standard deck of playing cards comprises 52 cards excluding the joker with 4 suits and 13 ranks, our analysis considers a generalized deck where the numbers of suits and ranks can be set arbitrarily.
We derive the exact formulas for the mean and variance of the number of pairs in a given number of cards randomly selected from this generalized deck, expressed using the Gauss hypergeometric function.
Moreover, we derive the asymptotic behavior of the mean and variance as the number of ranks tends to infinity.
\end{abstract}

\maketitle

\section{Introduction}
Playing cards, along with dice and coins, are a standard object in problems of combinatorics and probability analysis~\cite{Epstein}.
They appear even in elementary exercises and learning material for mathematics~\cite{Knapp}.
The international standard deck of playing cards is a set of $52$ cards comprising $4$ suits (i.e., clubs, diamonds, hearts, and spades) and $13$ ranks (i.e., 2--9, jack, queen, king, and ace).
The coexistence of suits and ranks can yield complex situations.
For example, when five cards are dealt, the probability of obtaining the poker hand of four of a kind is~\cite{Epstein}
\[
\frac{\binom{13}{1}\binom{4}{4}\binom{12}{1}\binom{4}{1}}{\binom{52}{5}}=\frac{1}{4165}\approx2.4\times10^{-4}.
\]

Mathematical analysis related to playing cards has been particularly developed regarding shuffles.
A perfect riffle shuffle includes two different types: in- and out-shuffles.
The standard 52-card deck is known to return to its original order if perfect out-shuffles are repeated eight times~\cite{MorrisBook}.
The perfect riffle shuffle has been associated with dynamical systems~\cite{Scully} and group theory~\cite{Diaconis}.
Imperfect or random riffle shuffles have also been investigated.
The shuffling process is typically represented by the Gilbert--Shannon--Reeds model~\cite{Bayer}, where the rate of mixing is measured by the closeness to the uniform distribution of the permutations of the deck using the total variation distance.
Using this model, Bayer and Diaconis~\cite{Bayer} demonstrated that the 52-card deck becomes sufficiently mixed after approximately seven riffle shuffles.
Morris~\cite{Morris} investigated variations and generalization of the Gilbert--Shannon--Reeds model while other groups analyzed other shuffles such as overhand shuffle~\cite{Jonasson} and move-to-front shuffle~\cite{Fill}.
The mathematics of shuffling cards is comprehensively explained by Diaconis and Fulman~\cite{DiaconisBook}.

Mathematical analyses have been performed on card games such as poker~\cite{Bowling}, blackjack~\cite{Baldwin}, contract bridge~\cite{Frank}, baccarat~\cite{Kemeny}, and solitaire~\cite{Longpre}.
Even simple games such as memory have been shown to have certain nontrivial strategies for winning~\cite{Zwick}.
Mathematics has also been ingeniously applied to card tricks~\cite{Gardner}.

Rather than the standard $52$-card deck, our analysis considered a generalized deck of $N$ cards with $S(\ge2)$ suits, $R(\ge2)$ ranks, and no jokers, where $N=SR$.
The standard 52-card deck corresponds to $S=4$ and $R=13$, but other decks are used in some countries, e.g., a German deck of $40$ cards ($S=4$ and $R=10$) and a Spanish deck of $48$ cards ($S=4$ and $R=12$)~\cite{Britannica}.
Historically, decks of $S\ne4$ have sometimes been used.
For example, five-suit bridge was played by using the deck of $S=5$ and $R=13$.

In this study, we present a mathematical analysis on the \emph{pair}, which is defined as two cards with the same rank.
The pair plays a special role in card games such as poker, cribbage, old maid, and memory.
Let the random variable $X_n$ denote the number of pairs in $n$ cards randomly selected from a generalized $N$-card deck.
For example, in a standard 52-card deck, the set of five cards $\{3\clubsuit, 3\diamondsuit, 3\heartsuit, 4\heartsuit, 5\spadesuit\}$ has one pair:
\[
X_5(\{3\clubsuit, 3\diamondsuit, 3\heartsuit, 4\heartsuit, 5\spadesuit\})=1.
\]
The set of seven cards $\{\mathrm{J}\clubsuit, \mathrm{J}\diamondsuit, \mathrm{J}\heartsuit, \mathrm{J}\spadesuit, \mathrm{Q}\clubsuit, \mathrm{Q}\diamondsuit, \mathrm{K}\clubsuit\}$ has three pairs:
\[
X_7(\{\mathrm{J}\clubsuit, \mathrm{J}\diamondsuit, \mathrm{J}\heartsuit, \mathrm{J}\spadesuit, \mathrm{Q}\clubsuit, \mathrm{Q}\diamondsuit, \mathrm{K}\clubsuit\})=3.
\]
In terms of the number of pairs, the poker hands of two pairs, full house, and four of a kind all count as $X_5=2$.
In other words, these three poker hands have different strengths or occurrence probabilities~\cite{Epstein} but the same number of pairs.

To the best of the author's knowledge, although the pair is a simple concept, no thorough analysis of the pair beyond elementary exercises has been carried out.
This paper serves as the beginning of the in-depth mathematical analysis of the pair in playing cards.
As fundamental properties of the random variable $X_n$, the objective of this study is to derive its mean $\E{X_n}$ and variance $\Var{X_n}$ exactly and in closed forms, presented in Theorems~\ref{thm1} and \ref{thm2}, respectively.
Although the problem is simple, $\E{X_n}$ and $\Var{X_n}$ have nontrivial expressions involving the Gauss hypergeometric function.
Effective calculations can be performed by applying the properties of the multivariate hypergeometric distribution.
Theorem~\ref{thm3} states that $\E{X_n}/R$ and $\Var{X_n}/R$ remain finite in the $R\to\infty$ limit and provides their asymptotic forms.

\section{Main Theorems}
\begin{definition}[Gauss hypergeometric function~\cite{Olver}]
For any real number $\alpha$, the \emph{Pochhammer symbol} is defined as
\begin{equation}
(\alpha)_0=1,\quad (\alpha)_k=\alpha(\alpha+1)\cdots(\alpha+k-1).
\label{eq:Pochhammer}
\end{equation}
Using this symbol, the \emph{Gauss hypergeometric function} is defined as
\[
\GaussF{\alpha}{\beta}{\gamma}{z}\coloneqq\sum_{k=0}^\infty \frac{(\alpha)_k (\beta)_k}{(\gamma)_k}\frac{z^k}{k!}.
\]
\end{definition}

The mean and variance of the number of pairs $X_n$ can be expressed exactly using $\GaussFsymbol$.

\begin{theorem}
The mean of the number of pairs $X_n$ becomes
\[
\E{X_n}=
\begin{dcases}
\frac{n}{2}-\frac{R}{4}+\frac{R}{4}\frac{\binom{N-S}{n}}{\binom{N}{n}}\GaussF{-n}{-S}{N-S-n+1}{-1} & 0\le n\le\frac{N}{2},\\
\frac{n}{2}-\frac{R}{4}+(-1)^S\frac{R}{4}\frac{\binom{N-S}{N-n}}{\binom{N}{N-n}}\GaussF{n-N}{-S}{n-S+1}{-1} & \frac{N}{2}<n\le N.
\end{dcases}
\]
\label{thm1}
\end{theorem}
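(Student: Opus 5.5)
The plan is to rewrite $X_n$ as a sum over ranks of a function of the multivariate hypergeometric counts, and then reduce the mean to a single alternating hypergeometric sum.

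\emph{Decomposition.} For $r=1,\dots,R$, let $K_r$ be the number of selected cards of rank $r$. A rank holding $k$ cards contributes $\lfloor k/2\rfloor$ pairs; this matches the examples, where three of a kind gives one pair and four of a kind gives two. Hence $X_n=\sum_{r=1}^R\lfloor K_r/2\rfloor$. Using the identity $\lfloor k/2\rfloor = \frac{k}{2}-\frac{1-(-1)^k}{4}$ and $\sum_r K_r=n$, we get
\[
X_n=\frac{n}{2}-\frac{R}{4}+\frac14\sum_{r=1}^R(-1)^{K_r}.
\]
The vector $(K_1,\dots,K_R)$ is multivariate hypergeometric, so each marginal $K_r$ is hypergeometric with population $N$, $S$ successes and $n$ draws. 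By linearity,
\[
\E{X_n}=\frac n2-\frac R4+\frac R4\,\E{(-1)^{K}},\qquad
\E{(-1)^K}=\sum_{k}(-1)^k\frac{\binom{S}{k}\binom{N-S}{n-k}}{\binom{N}{n}} .
\]

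\emph{Case $0\le n\le N/2$.} I would factor out $\binom{N-S}{n}/\binom{N}{n}$ and identify the remaining summand as a hypergeometric term. We have
\[
\binom{S}{k}=\frac{(-1)^k(-S)_k}{k!}
\quad\text{and}\quad
\frac{\binom{N-S}{n-k}}{\binom{N-S}{n}}=\frac{n!}{(n-k)!}\frac{(N-S-n)!}{(N-S-n+k)!}=\frac{(-1)^k(-n)_k}{(N-S-n+1)_k}.
\]
With the extra factor $(-1)^k$, the summand becomes $\frac{(-n)_k(-S)_k}{(N-S-n+1)_k}\frac{(-1)^k}{k!}$. This gives the first line of the theorem. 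The series terminates because of $(-n)_k$, or $(-S)_k$ if that vanishes first.

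The one point needing care, which I expect to be the main obstacle, is that the lower parameter $N-S-n+1$ must be a positive integer. Otherwise $(N-S-n+1)_k$ could vanish and the factorial manipulation $(N-S-n)!$ would be meaningless. This requires $n\le N-S$. Since $R\ge 2$ gives $S\le N/2$, the condition $n\le N/2$ guarantees $n\le N-S$, so this is exactly why the case split sits at $N/2$.

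\emph{Case $N/2<n\le N$.} Here I use complementation. The unselected $N-n$ cards form a uniform random subset, and the count $K'$ of rank-$r$ cards among them satisfies $K=S-K'$. Therefore $\E{(-1)^K}=(-1)^S\E{(-1)^{K'}}$, where $K'$ is hypergeometric with $N-n$ draws. Since $N-n<N/2$, the first case applies with $n$ replaced by $N-n$. The prefactor becomes $\binom{N-S}{N-n}/\binom{N}{N-n}$, and the lower parameter becomes $N-S-(N-n)+1=n-S+1$, which is positive because $n>N/2\ge S$. This yields the second line of the theorem.
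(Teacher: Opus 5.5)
Your proof is correct and follows essentially the same route as the paper: you write $X_n=\sum_i\lfloor Y_{n,i}/2\rfloor$ and reduce $\E{\lfloor Y_{n,i}/2\rfloor}$ to the parity quantity $G(-1)=\E{(-1)^{Y_{n,i}}}$. Your identity $\lfloor k/2\rfloor=k/2-(1-(-1)^k)/4$ is just a pointwise form of the paper's odd-probability computation in Lemma~4.1(a). The only difference is that the paper quotes the two-branch closed form of $G(z)$ from the literature (Proposition~3.1(d)), whereas you derive $G(-1)$ yourself via Pochhammer identities and the complementation $K=S-K'$, which also explains the $(-1)^S$ factor and why each branch is valid on its range.
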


\begin{theorem}
The variance of the number of pairs $X_n$ for $R\ge4$ becomes
\[
\Var{X_n}=
\begin{dcases}
\frac{R}{16}+\frac{R(R-1)}{16}\frac{\binom{N-2S}{n}}{\binom{N}{n}}\GaussF{-n}{-2S}{N-2S-n+1}{-1}\\
\quad-\frac{R^2}{16}\left[\frac{\binom{N-S}{n}}{\binom{N}{n}}\GaussF{-n}{-S}{N-S-n+1}{-1}\right]^2 & \hspace{-1cm} 0\le n\le\frac{N}{2},\\
\frac{R}{16}+\frac{R(R-1)}{16}\frac{\binom{N-2S}{N-n}}{\binom{N}{N-n}}\GaussF{n-N}{-2S}{n-2S+1}{-1}\\
\quad-\frac{R^2}{16}\left[\frac{\binom{N-S}{N-n}}{\binom{N}{N-n}}\GaussF{n-N}{-S}{n-S+1}{-1}\right]^2 & \hspace{-1cm} \frac{N}{2}\le n\le N.
\end{dcases}
\]
For $R\le 3$ \textup{(}i.e., $R=3$ or $R=2$\textup{)},
\[
\Var{X_n}=
\begin{dcases}
\frac{R}{16}+(-1)^n\frac{R(R-1)}{16}\frac{\binom{2S}{n}}{\binom{N}{n}}\GaussF{-n}{2S-N}{2S-n+1}{-1}\\
\quad-\frac{R^2}{16}\left[\frac{\binom{N-S}{n}}{\binom{N}{n}}\GaussF{-n}{-S}{N-S-n+1}{-1}\right]^2 & \hspace{-3cm} 0\le n\le\frac{N}{2},\\
\frac{R}{16}+(-1)^{N-n}\frac{R(R-1)}{16}\frac{\binom{2S}{N-n}}{\binom{N}{N-n}}\GaussF{n-N}{2S-N}{n+2S-N+1}{-1}\\
\quad-\frac{R^2}{16}\left[\frac{\binom{N-S}{N-n}}{\binom{N}{N-n}}\GaussF{n-N}{-S}{n-S+1}{-1}\right]^2 & \hspace{-3cm} \frac{N}{2}\le n\le N.
\end{dcases}
\]
In both cases, the symmetry $\Var{X_n}=\Var{X_{N-n}}$ is obtained.
\label{thm2}
\end{theorem}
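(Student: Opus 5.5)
The plan is to reduce $X_n$ to a sum of parities and then compute one alternating hypergeometric sum in two orientations. Let $K_i$ ($i=1,\dots,R$) be the number of selected cards of rank $i$. Then $(K_1,\dots,K_R)$ is multivariate hypergeometric with parameters $(S,\dots,S;n)$, and a rank holding $k$ cards contributes $\lfloor k/2\rfloor$ pairs, so $X_n=\sum_i \lfloor K_i/2\rfloor$. Using the identity
\[
\lfloor k/2\rfloor=\frac{k}{2}-\frac14+\frac{(-1)^k}{4}
\]
together with $\sum_i K_i=n$, we get
\[
X_n=\frac{n}{2}-\frac{R}{4}+\frac14\sum_{i=1}^R(-1)^{K_i}.
\]
This is presumably the same decomposition behind Theorem~\ref{thm1}. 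Hence $\Var{X_n}=\frac1{16}\Var{\sum_i(-1)^{K_i}}$. Since $((-1)^{K_i})^2=1$ and the $K_i$ are exchangeable,
\[
\Var{X_n}=\frac{1}{16}\Bigl(R+R(R-1)\,\E{(-1)^{K_1+K_2}}-R^2\,\E{(-1)^{K_1}}^2\Bigr).
\]
Both expectations are of the form $\E{(-1)^K}$ for a univariate hypergeometric $K$ with $m$ successes among $N$ items and $n$ draws. For the first term, $K_1+K_2$ is such a variable with $m=2S$, by merging two ranks. For the second, $K_1$ has $m=S$.

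The key computation is
\[
\E{(-1)^K}=\sum_k(-1)^k\binom{m}{k}\binom{N-m}{n-k}\Big/\binom{N}{n}.
\]
Factor out $\binom{N-m}{n}$ and use $\binom{m}{k}=(-1)^k(-m)_k/k!$ and
\[
\frac{\binom{N-m}{n-k}}{\binom{N-m}{n}}=\frac{(-1)^k(-n)_k}{(N-m-n+1)_k}.
\]
The term then becomes $\frac{(-n)_k(-m)_k}{(N-m-n+1)_k}\frac{(-1)^k}{k!}$, so
\[
\E{(-1)^K}=\frac{\binom{N-m}{n}}{\binom{N}{n}}\GaussF{-n}{-m}{N-m-n+1}{-1}.
\]
This is legitimate only when the lower parameter $N-m-n+1$ is positive, i.e.\ $n\le N-m$. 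For $m=S$ and $n\le N/2$ this holds since $R\ge2$, which recovers the square term. For $m=2S$ and $n\le N/2$ it holds exactly when $R\ge4$, giving the first case of the theorem.

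For $R\le3$ the condition $n\le N-2S$ can fail, so I would instead index the sum by the number $j=n-k$ of draws from the $N-2S$ other cards. Factor out $(-1)^n\binom{2S}{n}$ and repeat the same Pochhammer manipulation with the roles of $m$ and $N-m$ swapped. This gives
\[
(-1)^n\frac{\binom{2S}{n}}{\binom{N}{n}}\GaussF{-n}{2S-N}{2S-n+1}{-1},
\]
which is valid when $n\le 2S$. That holds for $n\le N/2\le 3S/2$ when $R\le3$. The main obstacle is exactly this bookkeeping: making sure no lower Pochhammer parameter is a nonpositive integer within the range of summation, so that the terminating ${}_2F_1$ really equals the finite sum. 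I would settle it by checking these inequalities case by case as above.

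For $n\ge N/2$, I use complementation. The unselected $N-n$ cards form a uniform random $(N-n)$-subset with rank counts $S-K_i$. Thus $\sum_i(-1)^{K_i}=(-1)^S\sum_i(-1)^{S-K_i}$, and a global sign does not change the variance, so $\Var{X_n}=\Var{X_{N-n}}$. Applying the $n\le N/2$ formulas with $n$ replaced by $N-n$ then yields the second lines in both cases, and the symmetry statement follows as well.
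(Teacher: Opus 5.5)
Your proof is correct, and it takes a shorter route than the paper's to the same intermediate identity $\Var{X_n}=\frac{R}{16}+\frac{R(R-1)}{16}G_2(-1)-\frac{R^2}{16}(G(-1))^2$ (Eq.~\eqref{eq:proof_V}).

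The paper never uses the pointwise identity $\lfloor k/2\rfloor=k/2-1/4+(-1)^k/4$. Instead, Lemma~\ref{lemma1} computes $\Var{\lfloor Y_{n,i}/2\rfloor}$ and $\Cov{\lfloor Y_{n,i}/2\rfloor}{\lfloor Y_{n,j}/2\rfloor}$ separately by splitting sums into even and odd parts. That brings in $G'(-1)$, the conditional mean from Proposition~\ref{prop3.4}, and the parity probability $p_{\text{odd},\text{odd}}$. All of the $\Var{Y_{n,i}}$, $\Cov{Y_{n,i}}{Y_{n,j}}$ and $G'(-1)$ contributions then cancel when summed over $i,j$, because $\sum_i Y_{n,i}=n$ is deterministic. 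By substituting $\sum_i K_i=n$ before taking the variance, you make that cancellation automatic and need only the second moment of a sum of $\pm1$ variables.

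There are two further differences in how the ${}_2F_1$ forms are obtained:
\begin{enumerate}
\item The paper imports the closed forms of $G(-1)$ and $G_2(-1)$ from the cited pgf formulas, including the $R\le3$ variant and the $n\ge N/2$ branches. You derive $\E{(-1)^K}$ directly by the Pochhammer manipulation and check the validity conditions explicitly: $n\le N-m$ in one orientation, $n\le 2S$ in the other. This makes transparent why the formula switches at $R=4$.
\item You get the $n\ge N/2$ branches, together with $\Var{X_n}=\Var{X_{N-n}}$, from complementation. The paper instead reads the symmetry off the second-branch pgf expressions.
\end{enumerate}

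What the paper's longer route buys is the individual variance and covariance of $\lfloor Y_{n,i}/2\rfloor$ as by-products. Yours is self-contained and gives the symmetry a conceptual explanation.
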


The proofs of these theorems are presented in Section~\ref{sec4}.

\begin{corollary}\label{cor2.1}
For any $S$ and $R$, the mean $\E{X_n}$ and variance $\Var{X_n}$ are polynomials of $n$ whose degrees are
\[
\deg(\E{X_n})=S,\quad
\deg(\Var{X_n})=2S.
\]
\end{corollary}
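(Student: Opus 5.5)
Rather than manipulate the hypergeometric expressions in Theorems~\ref{thm1} and \ref{thm2} directly, I would go back to the combinatorial structure of $X_n$. Let $Y_r$ be the number of selected cards of rank $r$, so that $X_n=\sum_{r=1}^R \lfloor Y_r/2\rfloor$. Use $\lfloor y/2\rfloor = y/2-\tfrac14+\tfrac14(-1)^y$ to write
\[
X_n=\frac{n}{2}-\frac{R}{4}+\frac14\sum_{r=1}^R(-1)^{Y_r}.
\]
This is the decomposition visible in Theorem~\ref{thm1}. It reduces the claim to showing the following:
\begin{itemize}
\item $\E{(-1)^{Y_1}}$ is a polynomial in $n$ of degree exactly $S$;
\item $\E{(-1)^{Y_1+Y_2}}$ is a polynomial in $n$ of degree exactly $2S$.
\end{itemize}
The variance is
\[
\Var{X_n}=\tfrac{1}{16}\Bigl[R+R(R-1)\E{(-1)^{Y_1+Y_2}}-R^2\E{(-1)^{Y_1}}^2\Bigr],
\]
which uses $\E{(-1)^{2Y_r}}=1$ and exchangeability of the ranks.

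\textbf{Step 1 (polynomiality).} For a fixed set $A$ of $m$ cards ($m=S$ or $2S$), $Y_A$ is hypergeometric. Expand
\[
(-1)^{Y_A}=\prod_{c\in A}(1-2\,\mathbf 1_{c})=\sum_{j=0}^{m}(-2)^j\binom{Y_A}{j}.
\]
The factorial moments are
\[
\E{\binom{Y_A}{j}}=\binom{m}{j}\frac{(n)_j^{\downarrow}}{(N)_j^{\downarrow}},
\]
where $(n)_j^{\downarrow}=n(n-1)\cdots(n-j+1)$. Each such term is a polynomial in $n$ of degree $j$. Hence $\E{(-1)^{Y_A}}$ is a polynomial in $n$ of degree at most $m$, with leading coefficient $(-2)^m/(N)_m^{\downarrow}$. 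This is nonzero provided $m\le N$, which holds because $2S\le N$ when $R\ge2$. So $\deg \E{X_n}=S$ immediately. This route also sidesteps the case split at $n=N/2$ in the theorems. The two branches there must agree as polynomials, and this representation shows it directly.

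\textbf{Step 2 (variance degree), the main obstacle.} Write
\[
f(n)=\E{(-1)^{Y_1}}=a_S n^S+\cdots,\qquad g(n)=\E{(-1)^{Y_1+Y_2}}=b_{2S}n^{2S}+\cdots,
\]
with $a_S=(-2)^S/(N)_S^{\downarrow}$ and $b_{2S}=4^S/(N)_{2S}^{\downarrow}$. The $n^{2S}$ coefficient of $16\,\Var{X_n}$ is then
\[
4^S\left[\frac{R(R-1)}{(N)_{2S}^{\downarrow}}-\frac{R^2}{\bigl((N)_S^{\downarrow}\bigr)^2}\right].
\]
We need this to be nonzero; otherwise the degree drops. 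It is nonzero if and only if
\[
(R-1)\,\bigl((N)_S^{\downarrow}\bigr)^2\neq R\,(N)_{2S}^{\downarrow}.
\]
Factor $(N)_{2S}^{\downarrow}=(N)_S^{\downarrow}\,(N-S)_S^{\downarrow}$. The condition becomes
\[
(R-1)(N)_S^{\downarrow}\ne R\,(N-S)_S^{\downarrow},\quad\text{i.e.}\quad \prod_{i=0}^{S-1}\frac{N-i}{N-S-i}\neq\frac{R}{R-1}.
\]
The $i=0$ factor alone equals $N/(N-S)=SR/(S(R-1))=R/(R-1)$. Every other factor exceeds $1$ because $S\ge2$, so the product is strictly larger than $R/(R-1)$. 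Hence the leading coefficient is nonzero, in fact negative in sign.

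This final inequality is the only delicate point. I would double-check the falling-factorial bookkeeping in the leading coefficients, including the signs and powers of $2$, and check that the cases $R=2,3$ need no separate treatment. They do not, since the argument never used $R\ge4$; the split in Theorem~\ref{thm2} is only about how the hypergeometric expression is written.
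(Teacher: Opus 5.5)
Your argument is correct, but it gets polynomiality by a different route than the paper.

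\textbf{How the two proofs differ.} The paper works from the closed forms of Theorems~\ref{thm1} and \ref{thm2}. It expands the terminating ${}_2F_1$ into $\frac{(N-S)!}{N!}\sum_{k}(-1)^k\binom{S}{k}(N-n)\cdots(N-n-S+k+1)\,n\cdots(n-k+1)$. It then checks via $k'=S-k$ that the branch $n>N/2$ gives the same polynomial, and reads off the top coefficient $(-1)^S2^S(N-S)!/N!$ from $\sum_k\binom{S}{k}=2^S$.

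You bypass the hypergeometric formulas entirely. The identity $(-1)^{Y_A}=\sum_j(-2)^j\binom{Y_A}{j}$, combined with the factorial moments, gives one expression valid for all $0\le n\le N$. It is written in falling factorials of $n$, so the leading coefficient $(-2)^m/(N)_m^{\downarrow}$ is immediate. The identity $\lfloor y/2\rfloor=y/2-\frac14+\frac14(-1)^y$ then yields \eqref{eq:proof_V} in one line rather than through Lemma~\ref{lemma1}.

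\textbf{What each route buys.} Your route is uniform in $R$. The paper writes out only the $R\ge4$ expression for the $n^{2S}$ coefficient and defers the rest to ``a similar calculation.'' Yet for $R\le3$ its $G_2(-1)$ carries a factor $(-1)^n$ and is not manifestly a polynomial; your argument needs only $2S\le N$. The paper's route, in turn, reuses closed forms it has already established and displays the agreement of the two branches directly. The final nonvanishing check is the same inequality in both: your $\prod_{i=0}^{S-1}\frac{N-i}{N-S-i}>\frac{R}{R-1}$ is the paper's $1-\frac{N-S-1}{N-1}\cdots\frac{N-2S+1}{N-S+1}>0$.

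\textbf{Two small corrections.}

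First, the leading coefficient of the variance is positive, not negative. Your own inequality gives $(R-1)\bigl((N)_S^{\downarrow}\bigr)^2>R\,(N)_{2S}^{\downarrow}$, hence $\frac{R(R-1)}{(N)_{2S}^{\downarrow}}>\frac{R^2}{((N)_S^{\downarrow})^2}$. For $S=R=2$ the $n^4$ coefficient is $1/18$, as in Example~\ref{ex2.1}. This does not affect the conclusion that the coefficient is nonzero.

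Second, for the mean, say explicitly that $S\ge2$ is what keeps the linear term $n/2$ away from the $n^S$ coefficient. For $S=1$ the mean is identically zero.
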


\begin{proof}
By the definition of the hypergeometric function and $(-S)_k=(-S)(-S+1)\cdots(-1)\cdot0\cdots(k-S-1)=0$ for $k\ge S+1$, we obtain
\begin{align*}
&\frac{\binom{N-S}{n}}{\binom{N}{n}}\GaussF{-n}{-S}{N-S-n+1}{-1}\\
&=\frac{(N-n)(N-n-1)\cdots(N-n-S+1)}{N(N-1)\cdots(N-S+1)}\sum_{k=0}^S\frac{(-1)^k(-n)_k(-S)_k}{(N-S-n+1)_k k!}\\
&=\frac{(N-S)!}{N!}\sum_{k=0}^S (-1)^k\binom{S}{k}(N-n)(N-n-1)\cdots(N-n-S+k+1)\\
&\qquad\qquad n(n-1)\cdots(n-k+1),
\end{align*}
where we define $n(n-1)\cdots(n-k+1)=1$ for $k=0$ and $(N-n)(N-n-1)\cdots(N-n-S+k+1)=1$ for $k=S$ based on the definition of the Pochhammer symbol in Eq.~\eqref{eq:Pochhammer}.
This is a polynomial of $n$ whose degree is obviously at most $S$.
The coefficient of $n^S$ becomes
\[
\frac{(N-S)!}{N!}\sum_{k=0}^S (-1)^k\binom{S}{k}(-1)^{S-k}
=(-1)^S\frac{(N-S)!}{N!}\sum_{k=0}^S \binom{S}{k}
=(-1)^S\frac{(N-S)!}{N!}2^S\ne0.
\]
Thus, $\deg(\E{X_n})=S$.

The same expression is obtained for the case of $N/2\le n\le N$:
\begin{align*}
&(-1)^S\frac{\binom{N-S}{N-n}}{\binom{N}{N-n}}\GaussF{n-N}{-S}{n-S+1}{-1}\\
&=\frac{(N-S)!}{N!}\sum_{k=0}^S (-1)^{S+k}\binom{S}{k}(N-n)(N-n-1)\cdots(N-n-k+1)\\
&\qquad\qquad n(n-1)\cdots(n-S+k+1)\\
&=\frac{(N-S)!}{N!}\sum_{k'=0}^S (-1)^{k'}\binom{S}{k'}(N-n)(N-n-1)\cdots(N-n-S+k'+1)\\
&\qquad\qquad n(n-1)\cdots(n-k'+1),
\end{align*}
where $k'=S-k$ is used in the last equality.

A similar calculation can be applied to $\Var{X_n}$.
\begin{align*}
&\frac{\binom{N-2S}{n}}{\binom{N}{n}}\GaussF{-n}{-2S}{N-2S-n+1}{-1}\\
&=\frac{(N-2S)!}{N!}\sum_{k=0}^{2S}(-1)^k\binom{2S}{k}(N-n)\cdots(N-2S-n+k+1)n\cdots(n-k+1).
\end{align*}
The coefficient of $n^{2S}$ in $\Var{X_n}$ becomes
\begin{align*}
&\frac{R(R-1)}{16}\frac{(N-2S)!}{N!}\sum_{k=0}^{2S}\binom{2S}{k}-\frac{R^2}{16}\left[\frac{(N-S)!}{N!}2^S\right]^2\\
&=\frac{R(R-1)}{16}\frac{(N-2S)!}{N!}4^S-\frac{R^2}{16}\frac{((N-S)!)^2}{(N!)^2}4^S\\
&=\frac{R(R-1)}{16}\frac{(N-2S)!}{N!}4^S\left[1-\frac{N-S-1}{N-1}\frac{N-S-2}{N-2}\cdots\frac{N-2S+1}{N-S+1}\right]>0.
\end{align*}
Thus, $\deg(\Var{X_n})=2S$.
\end{proof}

\begin{example}[$S=2$]\label{ex2.1}
When $S=2$, $\E{X_n}$ is a quadratic function of $n$ owing to Corollary~\ref{cor2.1}.
We immediately obtain $\E{X_0}=\E{X_1}=0$ because pairs cannot be created when $n\le1$.
When $n=N=2R$, $\E{X_N}=\E{X_{2R}}=R$.
A quadratic function that satisfies these three conditions can then be uniquely determined:
\begin{equation}
\E{X_n}=\frac{n^2-n}{2(2R-1)}.
\label{eq:E_S2}
\end{equation}

From Corollary~\ref{cor2.1}, $\Var{X_n}$ is a fourth-order polynomial of $n$.
Owing to the symmetry $\Var{X_n}=\Var{X_{2R-n}}$, we can assume $\Var{X_n}=a(n-R)^4+b(n-R)^2+c$, where $a$, $b$, and $c$ are functions of $R$ and not of $n$.
Just like for the mean, we obtain $\Var{X_0}=\Var{X_1}=0$.
In addition, when $n=2$, the pair number in $n=2$ cards is either $0$ or $1$.
Hence, $\E{X_2^2}=\E{X_2}=\Prob{X_2=1}$.
The event $X_2=1$ is attained if and only if the two selected cards have the same rank.
Therefore,
\begin{equation}
\Prob{X_2=1}=\frac{\binom{S}{2}R}{\binom{N}{2}}=\frac{S-1}{N-1}.
\label{eq:P_2}
\end{equation}
Note that this relation is valid for general $S$ and is not limited only to the case of $S=2$.
For $S=2$,
\[
\Var{X_2}=\Prob{X_2=1}-\Prob{X_2=1}^2=\frac{2(R-1)}{(2R-1)^2}.
\]
From the three conditions for $\Var{X_n}$ at $n=0$, $1$, and $2$, the coefficients $a$, $b$, and $c$ are determined.
Then, 
\begin{align}
\Var{X_n}&=\frac{(n-R)^4-(2R^2-2R+1)(n-R)^2+R^2(R-1)^2}{2(2R-1)^2(2R-3)}\nonumber\\
&=\frac{n(n-1)(n-2R)(n-2R+1)}{2(2R-1)^2(2R-3)}.
\label{eq:var_S2}
\end{align}

Equations~\eqref{eq:E_S2} and \eqref{eq:var_S2} can be obtained by setting $S=2$ in Theorems~\ref{thm1} and \ref{thm2}, but this calculation method is easier.
\end{example}

%

\begin{figure}\centering
\raisebox{3.8cm}{\small{(a)}}
\includegraphics[scale=0.8]{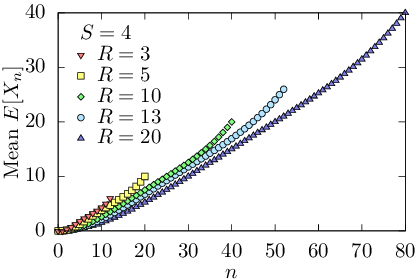}
\hspace{5mm}
\raisebox{3.8cm}{\small{(b)}}
\includegraphics[scale=0.8]{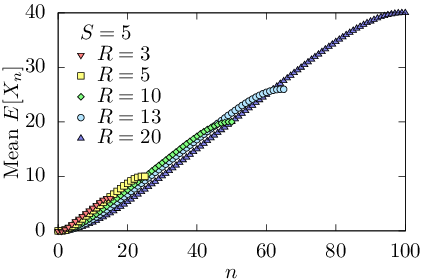}
\caption{
Numerical results for the mean $\E{X_n}$ of the number of pairs given in Theorem~\ref{thm1} as a function of $n$ for (a) $S=4$ and (b) $S=5$ at $R=3$, $5$, $10$, $13$, and $20$.
Circles in (a) (i.e., $S=4$ and $R=13$) correspond to the standard 52-card deck.
}
\label{fig1}
\end{figure}

The expressions of $\E{X_n}$ and $\Var{X_n}$ get more complicated for larger $S$, and their properties are difficult to understand for general $S$.
Figure~\ref{fig1} shows the numerical results for the mean $\E{X_n}$ as a function of $n$ with (a) $S=4$ and (b) $S=5$ at $R=3$, $5$, $10$, $13$, and $20$.
The numerical results in this study were obtained using Python, especially the hyp2f1 function from the SciPy library for the Gauss hypergeometric function.
From Theorem~\ref{thm1}, $\E{X_n}$ consists of three terms: the constant term $-R/4$, linear term $n/2$, and remaining hypergeometric term.
Owing to the $(-1)^S$ factor, the symmetry of the hypergeometric term changes depending on the parity of $S$.
It is symmetric with respect to $n$ and $N-n$ for even $S$, but it is antisymmetric for odd $S$.
Corresponding to this difference in symmetry, the shapes (i.e., convexity) of the right ends of the graphs are different for $S=4$ and $5$.

\begin{figure}\centering
\raisebox{3.8cm}{\small{(a)}}
\includegraphics[scale=0.8]{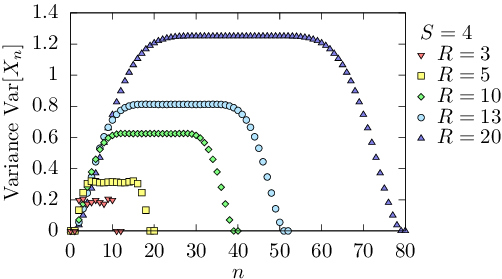}
\hspace{5mm}
\raisebox{3.8cm}{\small{(b)}}
\includegraphics[scale=0.8]{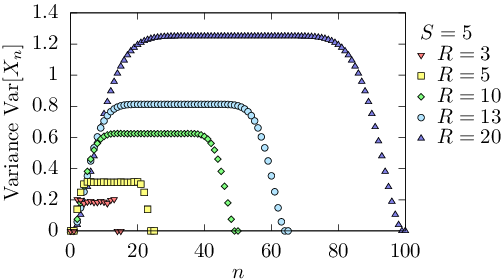}
\caption{
Numerical results for the variance $\Var{X_n}$ of the number of pairs given in Theorem~\ref{thm2} as a function of $n$ for (a) $S=4$ and (b) $S=5$ at $R=3$, $5$, $10$, $13$, and $20$.
Circles in (a) (i.e., $S=4$ and $R=13$) correspond to the standard 52-card deck.
}
\label{fig2}
\end{figure}

The numerical results for the variance $\Var{X_n}$ are shown in Fig.~\ref{fig2} for (a) $S=4$ and (b) $S=5$.
Each graph is symmetric with respect to the axis $n=N/2$.
Unlike the mean $\E{X_n}$ in Fig.~\ref{fig1}, the $\Var{X_n}$ graph has a similar shape regardless of whether  $S$ is even or odd.

To prove Theorem~\ref{thm1}, a simple approach is to find $\Prob{X_n=k}$ for each $k=0, 1,\ldots$ and to compute $\E{X_n}=\sum_k k\Prob{X_n=k}$.
In reality, however, $\Prob{X_n=k}$ is very difficult to calculate for arbitrary $n$, $k$, $S$, and $R$.
Although the case of $n=2$ can be calculated as in Eq.~\eqref{eq:P_2}, this is an exceptionally easy case.
Rather than calculating $\Prob{X_n=k}$, we prove Theorems~\ref{thm1} and \ref{thm2} by effectively utilizing the properties of the hypergeometric and multivariate hypergeometric distributions.

\section{Preliminary: hypergeometric and multivariate hypergeometric distributions}
Here, we review the hypergeometric distribution, which plays a crucial role in the proofs of our main theorems~\ref{thm1} and \ref{thm2}.
When $n$ cards are randomly selected from a deck of $N=SR$ cards, the number of rank-$i$ cards, represented by the random variable $Y_{n,i}$ in this study, follows the \emph{hypergeometric distribution}:
\begin{equation}
\Prob{Y_{n,i}=k}=\dfrac{\dbinom{S}{k}\dbinom{N-S}{n-k}}{\dbinom{N}{n}}
\label{eq:hg_pmf}
\end{equation}
for $i=1,\ldots,R$.
The random variables $Y_{n,1},\ldots, Y_{n,R}$ are not independent, and their joint distribution becomes
\[
\Prob{Y_{n,1}=k_1,\ldots,Y_{n,R}=k_R}=\frac{\dbinom{S}{k_1}\cdots\dbinom{S}{k_R}}{\dbinom{N}{n}},
\]
where the numerator represents the combination of ways to choose $k_i$ cards from the $S$ cards of rank $i$ for $i=1,\ldots,R$, subject to $k_1+\cdots+k_R=n$.
This distribution is an example of the \emph{multivariate hypergeometric distribution}~\cite{Bishop}, which is explained in detail by Balakrishnan~\cite{Balakrishnan}.
The basic properties of the multivariate hypergeometric distribution are presented in the following Proposition~\ref{prop3.1}.

\begin{proposition}[Basic properties of the multivariate hypergeometric distribution~\cite{Bishop, Yamamoto}]\label{prop3.1}
\renewcommand{\labelenumi}{$(\mathrm{\alph{enumi}})$}
\renewcommand{\theenumi}{\labelenumi}
The multivariate hypergeometric random variables $(Y_{n,1},\ldots,Y_{n,R})$ satisfy the following properties.
\begin{enumerate}
\item\label{prop3.1-a} The mean is expressed as
\begin{equation}
\E{Y_{n,i}}=\frac{nS}{N}=\frac{n}{R}.
\label{eq:hg_mean}
\end{equation}
\item The variance is expressed as
\[
\Var{Y_{n,i}}=\frac{n(R-1)(N-n)}{R^2(N-1)}.
\]
\item The covariance is expressed as
\[
\Cov{Y_{n,i}}{Y_{n,j}}=-\frac{n(N-n)}{R^2(N-1)}
\]
for $i\ne j$.
\item The probability-generating function is expressed using the Gauss hypergeometric function:
\begin{subequations}\label{eq:pgf}
\begin{align}
G(z)&\coloneqq\E{z^{Y_{n,i}}}=\sum_{k=0}^S z^k \Prob{Y_{n,i}=k}\nonumber\\
& && 0\le n\le \dfrac{N}{2},\label{eq:pgf-1}\\
&=
\smash{\left\{\begin{array}{@{}l@{}}
\dfrac{\binom{N-S}{n}}{\binom{N}{n}}\GaussF{-n}{-S}{N-S-n+1}{z}\\[5\jot]
\dfrac{\binom{N-S}{N-n}}{\binom{N}{N-n}}z^S\GaussF{n-N}{-S}{n-S+1}{z^{-1}}
\end{array}\right.} &&\notag\\[-\jot]
& && \dfrac{N}{2}\le n\le N.\label{eq:pgf-2}
\end{align}
\end{subequations}
\end{enumerate}

\end{proposition}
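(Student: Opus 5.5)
The plan is to write $Y_{n,i}$ as a sum of indicator variables for items (a)--(c), and to read the probability-generating function in (d) directly off the probability mass function \eqref{eq:hg_pmf} by a term-ratio argument.

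For (a)--(c), label the $S$ cards of rank $i$ and let $I_c$ be the indicator that card $c$ is among the $n$ selected cards. Then $Y_{n,i}=\sum_{c\in\text{rank } i} I_c$. By symmetry $\Prob{I_c=1}=n/N$ and, for $c\ne c'$, $\Prob{I_c=I_{c'}=1}=n(n-1)/(N(N-1))$.
\begin{itemize}
\item Linearity gives $\E{Y_{n,i}}=Sn/N=n/R$.
\item Summing the single and pairwise terms gives
\[
\E{Y_{n,i}^2}=\frac{Sn}{N}+\frac{S(S-1)n(n-1)}{N(N-1)}.
\]
Subtracting $(n/R)^2$ and simplifying with $N=SR$ should give $\Var{Y_{n,i}}=n(R-1)(N-n)/(R^2(N-1))$.
\item For the covariance, note that $\sum_{j=1}^R Y_{n,j}=n$ is deterministic, so its variance vanishes:
\[
R\,\Var{Y_{n,i}}+R(R-1)\,\Cov{Y_{n,i}}{Y_{n,j}}=0 .
\]
Solving this yields the stated covariance without further computation. (Alternatively, one can compute $\E{Y_{n,i}Y_{n,j}}=S^2n(n-1)/(N(N-1))$ directly as a check.)
\end{itemize}

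For (d), write $G(z)=\sum_{k}t_k z^k$ with $t_k=\binom{S}{k}\binom{N-S}{n-k}/\binom{N}{n}$. The $k=0$ term is $t_0=\binom{N-S}{n}/\binom{N}{n}$. The ratio of consecutive terms is
\[
\frac{t_{k+1}}{t_k}=\frac{(S-k)(n-k)}{(k+1)(N-S-n+k+1)}=\frac{(-S+k)(-n+k)}{(N-S-n+1+k)(k+1)}.
\]
This is exactly the term ratio of $\GaussF{-n}{-S}{N-S-n+1}{z}$. Induction on $k$ therefore gives $t_k z^k=t_0\,(-n)_k(-S)_k z^k/((N-S-n+1)_k k!)$, which is the first line of \eqref{eq:pgf-1}.

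The step I expect to need the most care is the legitimacy of this identification. The denominator Pochhammer symbol $(N-S-n+1)_k$ must not vanish, and the series must terminate at the right place. For $0\le n\le N/2$ and $R\ge2$ we have $n\le N/2\le N-S$, so $N-S-n+1\ge1$ and no denominator vanishes. Moreover $(-S)_k$ and $(-n)_k$ kill all terms with $k>\min(S,n)$, matching the support of $Y_{n,i}$. This explains why the formula is only asserted for $n\le N/2$.

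For $N/2\le n\le N$ I pass to the complement: the $N-n$ unselected cards form a uniform random $(N-n)$-subset. The number of unselected rank-$i$ cards, $S-Y_{n,i}$, is therefore hypergeometric with sample size $N-n\le N/2$. Hence
\[
G(z)=z^S\,\E{z^{-(S-Y_{n,i})}}.
\]
Applying the already-proved case with $n$ replaced by $N-n$ and $z$ by $z^{-1}$ gives the second line of \eqref{eq:pgf-2}.
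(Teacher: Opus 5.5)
Your proof is correct. The paper does not prove this proposition: it quotes (a)--(d) from Bishop et al.\ and Yamamoto, and adds only the Remark that the first branch of $G(z)$ becomes a $0\cdot\infty$ form for $n\ge N-S+1$. So there is no in-paper argument to compare against. Your argument is a self-contained derivation of what the paper takes from the literature: indicators for (a) and (b), the zero variance of $\sum_j Y_{n,j}=n$ for (c), the term-ratio identification for the first branch of (d), and the complement substitution $Y_{n,i}\mapsto S-Y_{n,i}$, $n\mapsto N-n$, $z\mapsto z^{-1}$ for the second branch. The complement step is the most informative part. It explains where the prefactor $z^S$ and the argument $z^{-1}$ in \eqref{eq:pgf-2} come from, and why that branch is the first one with $n$ replaced by $N-n$. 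The paper only asserts this.

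A few points to tighten.

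(i) Write the recursion multiplicatively, $t_{k+1}=t_k\,(S-k)(n-k)/[(k+1)(N-S-n+k+1)]$, rather than as a quotient $t_{k+1}/t_k$. The quotient is undefined once $t_k=0$, i.e.\ for $k>\min(S,n)$. The multiplicative form holds for every $k\ge0$ because the denominator is positive, so the induction runs uniformly.

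(ii) Your legitimacy check actually shows the first branch is valid for every $n\le N-S$, since that is exactly what $N-S-n+1\ge1$ and $\binom{N-S}{n}\neq0$ require. It therefore does not explain a restriction to $n\le N/2$. As the paper's Remark says, the cut at $N/2$ is cosmetic, and the genuine breakdown occurs only for $n\ge N-S+1$.

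(iii) In (c), the identity $R\,\Var{Y_{n,i}}+R(R-1)\Cov{Y_{n,i}}{Y_{n,j}}=0$ uses that all variances and all pairwise covariances coincide. This follows from exchangeability of the ranks, and you should say so.

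(iv) The identity $G(z)=z^S\,\E{z^{-(S-Y_{n,i})}}$ holds a priori only for $z\neq0$. It extends to $z=0$ because both sides are polynomials.
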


\begin{remark}
The first and second arguments of the hypergeometric functions in Eq.~\eqref{eq:pgf} (i.e., $-n$, $-S$, and $n-N$) are all negative.
Hence, the infinite sum of $\GaussFsymbol$ terminates at a finite term, and $G(z)$ is always a polynomial of $z$.
\end{remark}

\begin{remark}
In many books such as Bishop et al.~\cite{Bishop}, the generating functions of the hypergeometric distribution (e.g., the probability- and moment-generating functions and the characteristic function) appear only in a form corresponding to Eq.~\eqref{eq:pgf-1}, and Eq.~\eqref{eq:pgf-2} is not mentioned.
The hypergeometric function in Eq.~\eqref{eq:pgf-1} becomes divergent when the third argument $N-S-n+1$ of $\GaussFsymbol$ is not positive (i.e., when $n\ge N-S+1$) because $(N-S-n+1)_k=0$ for $k\ge S+n-N$.
Meanwhile, the binomial coefficient $\binom{N-S}{n}$ vanishes when $n\ge N-S+1$, and Eq.~\eqref{eq:pgf-1} takes the indeterminate form of $0\cdot\infty$.
The determinate and correct formula of $G(z)$ for $n\ge N-S+1$ is given by Eq.~\eqref{eq:pgf-2}.
For $S\le n\le N-S$, both Eqs.~\eqref{eq:pgf-1} and \eqref{eq:pgf-2} can be defined and become identical.
The boundary $N/2$ between Eqs.~\eqref{eq:pgf-1} and \eqref{eq:pgf-2} is selected simply due to symmetry in appearance and not essential.
See Yamamoto~\cite{Yamamoto} for the correct formulas for generating functions of the hypergeometric distribution in detail.
\end{remark}

To prove Theorems~\ref{thm1} and \ref{thm2}, we introduce two more properties of the multivariate hypergeometric distribution in addition to the basic properties in Proposition~\ref{prop3.1}.

\begin{proposition}\label{prop3.4}
\begin{enumerate}
\renewcommand{\labelenumi}{$(\mathrm{\alph{enumi}})$}
\renewcommand{\theenumi}{\labelenumi}
\item\label{prop3.4-a}
The conditional probability of $Y_{n,i}=k$ under the condition $Y_{n,j}=\ell$ \textup{(}$i\ne j$\textup{)} becomes
\[
\Prob{Y_{n,i}=k\mid Y_{n,j}=\ell}=\frac{\dbinom{S}{k}\dbinom{N-2S}{n-k-\ell}}{\dbinom{N-S}{n-\ell}}.
\]
That is, the conditional probability distribution of $Y_{n,i}$ given $Y_{n,j}$ is a hypergeometric distribution where the parameters $N$ and $n$ in Eq.~\eqref{eq:hg_pmf} are replaced with $N-S$ and $n-Y_{n,j}$, respectively.

The corresponding conditional mean becomes
\[
\E{Y_{n,i}\mid Y_{n,j}=\ell}=\frac{n-\ell}{R-1},
\]
or
\[
\E{Y_{n,i}\mid Y_{n,j}}=\frac{n-Y_{n,j}}{R-1}.
\]

\item\label{prop3.4-b}
The sum of two variables in the multivariate hypergeometric distribution follows a hypergeometric distribution:
\[
\Prob{Y_{n,i}+Y_{n,j}=k}=\frac{\dbinom{2S}{k}\dbinom{N-2S}{n-k}}{\dbinom{N}{n}}.
\]
\end{enumerate}
The parameter $S$ in Eq.~\eqref{eq:hg_pmf} is replaced with $2S$.
\end{proposition}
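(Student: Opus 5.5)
We prove the two parts separately, each by a direct counting argument on the joint law of $(Y_{n,1},\ldots,Y_{n,R})$ stated above. Throughout we use that the $n$ selected cards form a uniformly random $n$-subset of the $N$ cards.

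For part~(a), we compute the conditional probability from its definition:
\[
\Prob{Y_{n,i}=k\mid Y_{n,j}=\ell}=\frac{\Prob{Y_{n,i}=k,\,Y_{n,j}=\ell}}{\Prob{Y_{n,j}=\ell}}.
\]
The denominator is Eq.~\eqref{eq:hg_pmf}, namely $\binom{S}{\ell}\binom{N-S}{n-\ell}/\binom{N}{n}$. For the numerator, we choose $\ell$ of the $S$ rank-$j$ cards, $k$ of the $S$ rank-$i$ cards, and the remaining $n-k-\ell$ cards from the $N-2S$ cards of other ranks. This gives
\[
\Prob{Y_{n,i}=k,\,Y_{n,j}=\ell}=\frac{\binom{S}{k}\binom{S}{\ell}\binom{N-2S}{n-k-\ell}}{\binom{N}{n}}.
\]
Dividing, the factors $\binom{S}{\ell}$ and $\binom{N}{n}$ cancel, leaving exactly the claimed formula. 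This is Eq.~\eqref{eq:hg_pmf} with $N\to N-S$ and $n\to n-\ell$, while $S$ is unchanged. The deck with rank $j$ removed has $N-S=S(R-1)$ cards and $R-1$ ranks.

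The conditional mean then follows from Proposition~\ref{prop3.1}\ref{prop3.1-a} applied to this reduced deck:
\[
\E{Y_{n,i}\mid Y_{n,j}=\ell}=\frac{(n-\ell)S}{N-S}=\frac{n-\ell}{R-1}.
\]
Replacing $\ell$ by the random variable $Y_{n,j}$ gives the second form.

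For part~(b), we merge ranks $i$ and $j$ into a single class of $2S$ cards. The event $Y_{n,i}+Y_{n,j}=k$ means that exactly $k$ selected cards lie in this class and $n-k$ lie among the other $N-2S$ cards. Counting these subsets gives $\binom{2S}{k}\binom{N-2S}{n-k}$, and dividing by $\binom{N}{n}$ yields the formula. Alternatively, one can sum part~(a) over $\ell$ and apply Vandermonde's identity $\sum_\ell\binom{S}{\ell}\binom{S}{k-\ell}=\binom{2S}{k}$.

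There is no real obstacle here. The only point needing care is the boundary conventions: binomial coefficients with out-of-range arguments are set to zero, so the formulas remain valid for all $k,\ell$. We assume $\Prob{Y_{n,j}=\ell}>0$ whenever we condition on $Y_{n,j}=\ell$.
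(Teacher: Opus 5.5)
Your proof is correct and matches the paper's: the paper proves (a) both by the reduced-deck argument and by the same ratio of joint to marginal probabilities you use, obtains the conditional mean by substituting $R-1$ and $n-\ell$ into Proposition~\ref{prop3.1}\ref{prop3.1-a}, and proves (b) by merging ranks $i$ and $j$ into a class of $2S$ cards, with Vandermonde's convolution as the alternative. One small wording fix: in the Vandermonde alternative you sum the joint probabilities $\Prob{Y_{n,i}=\ell, Y_{n,j}=k-\ell}$ that you computed in part (a), not the conditional probabilities stated in (a).
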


\begin{proof}
\ref{prop3.4-a}
When there are $\ell$ cards of rank $j$ in $n$ cards, the remaining $n-\ell$ cards are randomly selected from $N-S$ cards, excluding rank-$j$ cards.
Hence, the conditional probability $\Prob{Y_{n,i}=k\mid Y_{n,j}=\ell}$ can be obtained by replacing $N$ and $n$ in Eq.~\eqref{eq:hg_pmf} with $N-S$ and $n-\ell$, respectively.

We can provide another proof, based on the definition of the conditional probability
\[
\Prob{Y_{n,i}=k\mid Y_{n,j}=\ell}=\frac{\Prob{Y_{n,i}=k, Y_{n,j}=\ell}}{\Prob{Y_{n,j}=\ell}},
\]
with
\[
\Prob{Y_{n,j}=\ell}=\frac{\dbinom{S}{\ell}\dbinom{N-S}{n-\ell}}{\dbinom{N}{n}},\quad
\Prob{Y_{n,i}=k, Y_{n,j}=\ell}=\frac{\dbinom{S}{k}\dbinom{S}{\ell}\dbinom{N-2S}{n-k-\ell}}{\dbinom{N}{n}}.
\]

The conditional mean $\E{Y_{n,i}\mid Y_{n,j}=\ell}$ is immediately obtained by replacing $R$ and $n$ in Eq.~\eqref{eq:hg_mean} with $R-1$ and $n-\ell$, respectively.

\ref{prop3.4-b}
$Y_{n,i}+Y_{n,j}$ represents the number of rank-$i$ or rank-$j$ cards in $n$ selected cards.
As the deck contains $2S$ cards of rank $i$ or $j$, \ref{prop3.4-b} is proven.

Alternatively,
\begin{align*}
\Prob{Y_{n,i}+Y_{n,j}=k}&=\sum_{\ell=0}^k \Prob{Y_{n,i}=\ell, Y_{n,j}=k-\ell}\\
&=\sum_{\ell=0}^k \frac{\dbinom{S}{\ell}\dbinom{S}{k-\ell}\dbinom{N-2S}{n-k}}{\dbinom{N}{n}}
=\frac{\dbinom{2S}{k}\dbinom{N-2S}{n-k}}{\dbinom{N}{n}},
\end{align*}
where
\[
\sum_{\ell=0}^k \binom{S}{\ell}\binom{S}{k-\ell}=\binom{2S}{k}
\]
is a special case of Vandermonde's convolution~\cite{Riordan}.
\end{proof}

According to Yamamoto~\cite{Yamamoto}, the probability-generating function of $Y_{n,i}+Y_{n,j}$, represented by $G_2(z)$ in this study, can be written explicitly.
For $R\ge 4$,
\begin{align}
G_2(z)=
\begin{dcases}
\dfrac{\binom{N-2S}{n}}{\binom{N}{n}}\GaussF{-n}{-2S}{N-2S-n+1}{z} & 0\le n\le\frac{N}{2},\\
\dfrac{\binom{N-2S}{N-n}}{\binom{N}{N-n}}z^{2S}\GaussF{n-N}{-2S}{n-2S+1}{z^{-1}} & \frac{N}{2}\le n\le N.
\end{dcases}
\label{eq:G2-1}
\end{align}
For $R\le 3$ (i.e., $R=3$ or $R=2$),
\begin{align}
G_2(z)=
\begin{dcases}
\dfrac{\binom{2S}{n}}{\binom{N}{n}}z^n\GaussF{-n}{2S-N}{2S-n+1}{z^{-1}} & 0\le n\le\frac{N}{2},\\
\dfrac{\binom{2S}{N-n}}{\binom{N}{N-n}}z^{n+2S-N}\GaussF{n-N}{2S-N}{n+2S-N+1}{z} & \frac{N}{2}\le n\le N.
\end{dcases}
\label{eq:G2-2}
\end{align}
In particular, when $R=2$, $G_2(z)$ is simplified to $G_2(z)=z^n$ for any $0\le n\le N$.
The expression of $G_2(z)$ changes depending on $R\ge4$ or $R\le3$; this is owing to whether the maximum value $2S$ of $Y_{n,i}+Y_{n,j}$ exceeds half of the total number $N$ of cards~\cite{Yamamoto}.

\section{Proof of main theorems}\label{sec4}
Here, we present the proofs of Theorems~\ref{thm1} and \ref{thm2}.

By the definition of pair, the following important relation between the number of pairs $X_n$ and the multivariate hypergeometric variable $Y_{n,i}$ ($i=1,\ldots,R$) can be obtained.

\begin{proposition}\label{prop4.1}
\[
X_n=\sum_{i=1}^R \left\lfloor\frac{Y_{n,i}}{2}\right\rfloor.
\]
\end{proposition}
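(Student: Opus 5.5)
The plan is to prove Proposition~\ref{prop4.1} by a rank-by-rank counting argument. The $n$ selected cards split into the $R$ disjoint classes of cards of rank $1,\ldots,R$, and by definition $Y_{n,i}$ of them have rank $i$. Since a pair consists of two cards of the same rank, every pair in the selection lies entirely inside one rank class. It therefore suffices to show that a class of $Y_{n,i}$ cards contributes exactly $\lfloor Y_{n,i}/2\rfloor$ pairs, and then to sum over $i=1,\ldots,R$.

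The key step is to fix the counting convention so that it agrees with the examples in the introduction. There, three cards of equal rank ($3\clubsuit,3\diamondsuit,3\heartsuit$) count as one pair, and four jacks count as two pairs. So the number of pairs is the maximum number of \emph{disjoint} same-rank two-card subsets, not the number $\binom{Y_{n,i}}{2}$ of all two-element subsets. Under this reading, $m$ cards of a single rank can be grouped greedily into $\lfloor m/2\rfloor$ disjoint pairs, with one card left over if $m$ is odd. No more pairs are possible, because disjoint pairs use two cards each, and $2p\le m$ forces $p\le\lfloor m/2\rfloor$.

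Since the rank classes are disjoint, a maximal family of disjoint pairs in the whole selection is just a union of maximal families chosen within each class independently. This gives $X_n=\sum_{i=1}^R\lfloor Y_{n,i}/2\rfloor$.

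The only real obstacle is definitional: the statement depends on interpreting ``number of pairs'' as a maximum disjoint matching. I would state this interpretation explicitly, check it against the two examples (the counts $1$ and $3$ come out as $\lfloor 3/2\rfloor=1$ and $\lfloor4/2\rfloor+\lfloor2/2\rfloor+\lfloor1/2\rfloor=3$), and then the argument above is immediate.
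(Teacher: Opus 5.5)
Your proposal is correct and takes the same approach as the paper, which simply asserts the identity ``by the definition of pair'' without further argument. You make explicit what the paper leaves implicit: $X_n$ counts the maximum number of disjoint same-rank two-card subsets, which is consistent with the introduction's examples ($\lfloor 3/2\rfloor=1$ and $2+1+0=3$). Given that reading, your per-rank bound $2p\le m$ and the decomposition over disjoint rank classes give the formula.
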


According to this proposition, the properties of $X_n$ can be reduced to those of the random variable $\lfloor Y_{n,i}/2\rfloor$.

\begin{lemma}[Properties of $\lfloor Y_{n,i}/2\rfloor$]\label{lemma1}
The mean and variance of $\lfloor Y_{n,i}/2\rfloor$ for $i=1,\ldots,R$) and the covariance of $\lfloor Y_{n,i}/2\rfloor$ and $\lfloor Y_{n,j}/2\rfloor$ \textup{(}$i\ne j$\textup{)} can be written by using the probability-generating functions $G(z)$ in Eq.~\eqref{eq:pgf} and $G_2(z)$ in Eqs.~\eqref{eq:G2-1} and \eqref{eq:G2-2}.
\begin{enumerate}
\renewcommand{\labelenumi}{$\mathrm{(\alph{enumi})}$}
\renewcommand{\theenumi}{\labelenumi}
\item\label{itm:lemma1-a} The mean of $\lfloor Y_{n,i}/2\rfloor$ can be expressed by $G(z)$ at $z=-1$:
\[
\E{\left\lfloor\dfrac{Y_{n,i}}{2}\right\rfloor}=\frac{n}{2R}-\frac{1}{4}+\frac{G(-1)}{4}.
\]
\item\label{itm:lemma1-b} The variance of $\lfloor Y_{n,i}/2\rfloor$ can be expressed by $G(z)$ and its derivative $G'(z)$ at $z=-1$:
\[
\Var{\left\lfloor\dfrac{Y_{n,i}}{2}\right\rfloor}=\frac{n(R-1)(N-n)}{4R^2(N-1)}+\frac{1}{16}-\frac{n}{4R}G(-1)-\frac{(G(-1))^2}{16}-\frac{G'(-1)}{4}.
\]
\item\label{itm:lemma1-c} The covariance of $\lfloor Y_{n,i}/2\rfloor$ and $\lfloor Y_{n,j}/2\rfloor$ \textup{(}$i\ne j$\textup{)} can be expressed by combining $G_2(-1)$ with $G(-1)$ and $G'(-1)$:
\[
\Cov{\left\lfloor\dfrac{Y_{n,i}}{2}\right\rfloor}{\left\lfloor\dfrac{Y_{n,j}}{2}\right\rfloor}=-\frac{n(N-n)}{4R^2(N-1)}+\frac{nG(-1)}{4R(R-1)}-\frac{(G(-1))^2}{16}+\frac{G'(-1)}{4(R-1)}+\frac{G_2(-1)}{16}.
\]
\end{enumerate}
\end{lemma}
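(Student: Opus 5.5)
The key identity is $\lfloor y/2\rfloor = \frac{y}{2}-\frac14+\frac{(-1)^y}{4}$, valid for every integer $y\ge0$: for even $y$ the right side is $y/2$, and for odd $y$ it is $(y-1)/2$. Setting $Y=Y_{n,i}$, we write $\lfloor Y/2\rfloor = \frac{Y}{2}-\frac14+\frac{(-1)^{Y}}{4}$ and reduce all three statements to moments of $Y$ and of $(-1)^Y$. The needed moments are $\E{(-1)^{Y}}=G(-1)$ (directly from the pgf in Proposition~\ref{prop3.1}), $\E{Y(-1)^{Y}}=-G'(-1)$ (since $zG'(z)=\E{Yz^{Y}}$), and $(-1)^{2Y}=1$. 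For two ranks we also use $\E{(-1)^{Y_{n,i}+Y_{n,j}}}=G_2(-1)$, which follows from Proposition~\ref{prop3.4}\ref{prop3.4-b}.

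\textbf{Part (a).} Taking expectations of the identity and using $\E{Y}=n/R$ from Eq.~\eqref{eq:hg_mean} gives the claim immediately.

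\textbf{Part (b).} Expand the variance:
\[
\Var{\lfloor Y/2\rfloor}=\tfrac14\Var{Y}+\tfrac1{16}\Var{(-1)^Y}+\tfrac14\Cov{Y}{(-1)^Y}.
\]
Here $\Var{(-1)^Y}=1-G(-1)^2$ and $\Cov{Y}{(-1)^Y}=-G'(-1)-\frac{n}{R}G(-1)$. Substituting $\Var{Y}$ from Proposition~\ref{prop3.1} then yields exactly the stated expression.

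\textbf{Part (c).} Write $A=Y_{n,i}$ and $B=Y_{n,j}$. By bilinearity,
\[
\Cov{\lfloor A/2\rfloor}{\lfloor B/2\rfloor}=\tfrac14\Cov{A}{B}+\tfrac18\Cov{A}{(-1)^B}+\tfrac18\Cov{(-1)^A}{B}+\tfrac1{16}\Cov{(-1)^A}{(-1)^B},
\]
where the two middle terms are equal by exchangeability. The first term comes from Proposition~\ref{prop3.1}. The last term is $\frac1{16}\bigl(G_2(-1)-G(-1)^2\bigr)$.

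For the mixed term we condition on $B$. Proposition~\ref{prop3.4}\ref{prop3.4-a} gives $\E{A(-1)^B}=\E{(-1)^B\,\frac{n-B}{R-1}}=\frac{nG(-1)+G'(-1)}{R-1}$. Subtracting $\E{A}\E{(-1)^B}=\frac{n}{R}G(-1)$ gives
\[
\Cov{A}{(-1)^B}=\frac{nG(-1)}{R(R-1)}+\frac{G'(-1)}{R-1}.
\]
Multiplying by $2\cdot\frac18=\frac14$ gives the terms $\frac{nG(-1)}{4R(R-1)}+\frac{G'(-1)}{4(R-1)}$, matching the statement.

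The main obstacle is this mixed covariance $\Cov{Y_{n,i}}{(-1)^{Y_{n,j}}}$. The conditional-mean formula of Proposition~\ref{prop3.4} is exactly what makes it tractable; after that, only sign bookkeeping (notably $\E{B(-1)^B}=-G'(-1)$) remains.
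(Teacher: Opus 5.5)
Your proof is correct. It uses the same ingredients as the paper: $\E{(-1)^{Y}}=G(-1)$, $\E{Y(-1)^{Y}}=-G'(-1)$, the conditional mean from Proposition~\ref{prop3.4}\ref{prop3.4-a}, and $G_2(-1)$ from Proposition~\ref{prop3.4}\ref{prop3.4-b}. What differs is the decomposition.

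The paper works directly with the distribution of $\lfloor Y_{n,i}/2\rfloor$. It splits every sum into even and odd classes and then solves small linear systems of sum/difference relations. These give $\Prob{Y_{n,i}\text{ odd}}=(1-G(-1))/2$ and the odd-weighted first moment $\frac{n}{2R}+\frac{G'(-1)}{2}$. An inclusion--exclusion step with $G_2(-1)$ then gives $p_{\text{odd},\text{odd}}$.

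Your identity $\lfloor y/2\rfloor=\frac y2-\frac14+\frac{(-1)^y}{4}$ encodes all of that parity bookkeeping at once, since the indicator of $y$ being odd is $(1-(-1)^y)/2$. Each part then collapses to a bilinear expansion in $Y$ and $(-1)^Y$. Your $\E{A(-1)^B}$ plays the role of the paper's $\sum_{k,\ell}k\,p_{k,2\ell+1}$, and your $\Cov{(-1)^A}{(-1)^B}$ replaces the computation of $p_{\text{odd},\text{odd}}$. The paper's remark that the leading terms come from $Y_{n,i}/2$ also becomes literally visible as the $\frac14\Var{Y}$ and $\frac14\Cov{A}{B}$ pieces.

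Your route has a further payoff. Summing the identity over $i$ gives $X_n=\frac n2-\frac R4+\frac14\sum_i(-1)^{Y_{n,i}}$. Since $\sum_i Y_{n,i}=n$ is deterministic, this yields $\Var{X_n}=\frac1{16}\Var{\sum_i(-1)^{Y_{n,i}}}=\frac1{16}[R(1-G(-1)^2)+R(R-1)(G_2(-1)-G(-1)^2)]$, which is Eq.~\eqref{eq:proof_V} directly. This explains why the $G'(-1)$ and conditional-mean terms of parts (b) and (c) cancel in Theorem~\ref{thm2}: they are not needed for it at all.

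The paper's route, in exchange, keeps its intermediate quantities as explicit probabilities with a direct interpretation.
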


\begin{remark}
From Proposition~\ref{prop3.1}, we obtain
\begin{align*}
&\E{\frac{Y_{n,i}}{2}}=\frac{n}{2R},\quad
\Var{\frac{Y_{n,i}}{2}}=\frac{n(R-1)(N-n)}{4R^2(N-1)},\\
&\Cov{\frac{Y_{n,i}}{2}}{\frac{Y_{n,j}}{2}}=-\frac{n(N-n)}{4R^2(N-1)}.
\end{align*}
These results appear as the first terms in Lemma~\ref{lemma1}\ref{itm:lemma1-a}--\ref{itm:lemma1-c}.
The remaining complicated terms in Lemma~\ref{lemma1}, which include hypergeometric functions in $G$ and $G_2$, can be attributed to the floor function $\lfloor\cdot\rfloor$.
\end{remark}

\begin{proof}
First, the condition $\lfloor Y_{n,i}/2\rfloor=k$ is satisfied only when $Y_{n,i}=2k$ or $Y_{n,i}=2k+1$.
Hence,
\[
\Prob{\left\lfloor\frac{Y_{n,i}}{2}\right\rfloor=k}=\Prob{Y_{n,i}=2k}+\Prob{Y_{n,i}=2k+1}.
\]

\ref{itm:lemma1-a}
The mean of $\lfloor Y_{n,i}/2\rfloor$ can be calculated as follows:
\begin{align*}
\E{\left\lfloor\dfrac{Y_{n,i}}{2}\right\rfloor}&=\sum_{k=0}^{\lfloor S/2\rfloor}k\Prob{\left\lfloor\frac{Y_{n,i}}{2}\right\rfloor=k}
=\sum_{k=0}^{\lfloor S/2\rfloor}k[\Prob{Y_{n,i}=2k}+\Prob{Y_{n,i}=2k+1}]\\
&=\frac{1}{2}\left(\E{Y_{n,i}}-\sum_{k=0}^{\lfloor S/2\rfloor}\Prob{Y_{n,i}=2k+1}\right).
\end{align*}
According to Proposition~\ref{prop3.1}\ref{prop3.1-a}, the first term can be immediately calculated as $\E{Y_{n,i}}=n/R$.
We then need to calculate the second term, which represents the probability of $Y_{n,i}$ being odd.

We obtain
\begin{equation}
\sum_{k=0}^{\lfloor S/2\rfloor}\Prob{Y_{n,i}=2k}+\sum_{k=0}^{\lfloor S/2\rfloor}\Prob{Y_{n,i}=2k+1}=G(1)=1,
\label{eq:Y_normalize}
\end{equation}
and
\begin{equation}
\sum_{k=0}^{\lfloor S/2\rfloor}\Prob{Y_{n,i}=2k}-\sum_{k=0}^{\lfloor S/2\rfloor}\Prob{Y_{n,i}=2k+1}=\E{(-1)^{Y_{n,i}}}=G(-1),
\label{eq:Y_difference}
\end{equation}
where $G$ is the probability-generating function of $Y_{n,i}$ given in Eq.~\eqref{eq:pgf}.
By subtracting Eq.~\eqref{eq:Y_difference} from Eq.~\eqref{eq:Y_normalize}, we obtain
\begin{equation}
\sum_{k=0}^{\lfloor S/2\rfloor}\Prob{Y_{n,i}=2k+1}=\frac{1-G(-1)}{2}.
\label{eq:odd}
\end{equation}
Therefore, \ref{itm:lemma1-a} is proven.

\ref{itm:lemma1-b}
The variance is defined by
\[
\Var{\left\lfloor\frac{Y_{n,i}}{2}\right\rfloor}=\E{\left\lfloor\frac{Y_{n,i}}{2}\right\rfloor^2}-\E{\left\lfloor\frac{Y_{n,i}}{2}\right\rfloor}^2.
\]
The second term is given by the square of \ref{itm:lemma1-a}.
The first term can be calculated as follows:
\begin{align}
\E{\left\lfloor\frac{Y_{n,i}}{2}\right\rfloor^2}&=\sum_{k=0}^{\lfloor S/2\rfloor}k^2[\Prob{Y_{n,i}=2k}+\Prob{Y_{n,i}=2k+1}]\nonumber\\
&=\frac{1}{4}\E{Y_{n,i}^2}-\frac{1}{2}\sum_{k=0}^{\lfloor S/2\rfloor}(2k+1)\Prob{Y_{n,i}=2k+1}+\frac{1}{4}\sum_{k=0}^{\lfloor S/2\rfloor}\Prob{Y_{n,i}=2k+1}
\label{eq:E_YY}
\end{align}
The second term of Eq.~\eqref{eq:E_YY} can be calculated in a similar manner as for Eq.~\eqref{eq:odd}.
The mean of $Y_{n,i}$ is then expressed as
\[
\sum_{k=0}^{\lfloor S/2\rfloor}2k\Prob{Y_{n,i}=2k}+\sum_{k=0}^{\lfloor S/2\rfloor}(2k+1)\Prob{Y_{n,i}=2k+1}=\E{Y_{n,i}}=\frac{n}{R}.
\]
If the sign of the second term is reversed, then
\begin{align*}
&\sum_{k=0}^{\lfloor S/2\rfloor}2k\Prob{Y_{n,i}=2k}-\sum_{k=0}^{\lfloor S/2\rfloor}(2k+1)\Prob{Y_{n,i}=2k+1}\\
&=\E{(-1)^{Y_{n,i}}Y_{n,i}}=\left.-\frac{d}{dz}\E{z^{Y_{n,i}}}\right|_{z=-1}=-G'(-1).
\end{align*}
From these two equations, we obtain
\begin{equation}
\sum_{k=0}^{\lfloor S/2\rfloor}(2k+1)\Prob{Y_{n,i}=2k+1}=\frac{n}{2R}+\frac{G'(-1)}{2}.
\label{eq:oddsum_kP}
\end{equation}
The third term in Eq.~\eqref{eq:E_YY} is calculated by using Eq.~\eqref{eq:odd}.
Therefore, \ref{itm:lemma1-b} is proven.

Although the specific form of $G'(-1)$ is not necessary for $\E{X_n}$ and $\Var{X_n}$, we calculate $G'(-1)$ here as a reference.
The differentiation formula of the hypergeometric function is given by~\cite{Olver}
\[
\frac{d}{dz}\GaussF{\alpha}{\beta}{\gamma}{z}=\frac{\alpha\beta}{\gamma}\GaussF{\alpha+1}{\beta+1}{\gamma+1}{z}.
\]
We can use this formula to obtain
\[
G'(-1)=
\begin{dcases}
\dfrac{\binom{N-S}{n-1}}{\binom{N}{n}}S\GaussF{-n+1}{-S+1}{N-S-n+2}{-1} & 0\le n\le \dfrac{N}{2},\\
\dfrac{\binom{N-S}{N-n-1}}{\binom{N}{N-n}}(-1)^S S\GaussF{n-N+1}{-S+1}{n-S+2}{-1} & \dfrac{N}{2}\le n\le N.
\end{dcases}
\]

\ref{itm:lemma1-c}
The covariance is defined as
\begin{equation}
\Cov{\left\lfloor\frac{Y_{n,i}}{2}\right\rfloor}{\left\lfloor\frac{Y_{n,j}}{2}\right\rfloor}=\E{\left\lfloor\frac{Y_{n,i}}{2}\right\rfloor\left\lfloor\frac{Y_{n,j}}{2}\right\rfloor}-\E{\left\lfloor\frac{Y_{n,i}}{2}\right\rfloor}\E{\left\lfloor\frac{Y_{n,j}}{2}\right\rfloor}.
\label{eq:cov_def}
\end{equation}
The second term on the right-hand side can be calculated by using \ref{itm:lemma1-a}.

We then need to compute
\begin{align*}
\E{\left\lfloor\frac{Y_{n,i}}{2}\right\rfloor \left\lfloor\frac{Y_{n,j}}{2}\right\rfloor}
&=\sum_{k=0}^{\lfloor S/2\rfloor}\sum_{\ell=0}^{\lfloor S/2\rfloor}k\ell[\Prob{Y_{n,i}=2k, Y_{n,j}=2\ell}+\Prob{Y_{n,i}=2k, Y_{n,j}=2\ell+1}\\
&\qquad\qquad+\Prob{Y_{n,i}=2k+1, Y_{n,j}=2\ell}+\Prob{Y_{n,i}=2k+1, Y_{n,j}=2\ell+1}].
\end{align*}
To simplify the notation, we introduce
\[
p_{k, \ell}=\Prob{Y_{n,i}=k, Y_{n,j}=\ell}.
\]
Consequently,
\begin{align}
&\E{\left\lfloor\frac{Y_{n,i}}{2}\right\rfloor \left\lfloor\frac{Y_{n,j}}{2}\right\rfloor}\nonumber\\
&=\frac{1}{4}\sum_{k=0}^{\lfloor S/2\rfloor}\sum_{\ell=0}^{\lfloor S/2\rfloor}(2k)(2\ell)p_{2k, 2\ell}
+\frac{1}{4}\sum_{k=0}^{\lfloor S/2\rfloor}\sum_{\ell=0}^{\lfloor S/2\rfloor}\left[(2k)(2\ell+1)p_{2k, 2\ell+1}-2kp_{2k, 2\ell+1}\right]\nonumber\\
&\quad+\frac{1}{4}\sum_{k=0}^{\lfloor S/2\rfloor}\sum_{\ell=0}^{\lfloor S/2\rfloor}\left[(2k+1)(2\ell)p_{2k+1, 2\ell}-2\ell p_{2k+1, 2\ell}\right]\nonumber\\
&\quad+\frac{1}{4}\sum_{k=0}^{\lfloor S/2\rfloor}\sum_{\ell=0}^{\lfloor S/2\rfloor}\left[(2k+1)(2\ell+1)p_{2k+1, 2\ell+1}-(2k+1)p_{2k+1, 2\ell+1}\right.\nonumber\\
&\qquad\left.\qquad -(2\ell+1)p_{2k+1, 2\ell+1}+p_{2k+1, 2\ell+1}\right]\nonumber\\
&=\frac{1}{4}\E{Y_{n,i}Y_{n,j}}-\frac{1}{4}\sum_{k=0}^S\sum_{\ell=0}^{\lfloor S/2\rfloor}kp_{k, 2\ell+1}-\frac{1}{4}\sum_{k=0}^{\lfloor S/2\rfloor}\sum_{\ell=0}^{S}\ell p_{2k+1, \ell}+\frac{1}{4}\sum_{k=0}^{\lfloor S/2\rfloor}\sum_{\ell=0}^{\lfloor S/2\rfloor}p_{2k+1, 2\ell+1}.
\label{eq:E_YiYj}
\end{align}
Owing to the symmetry of $p_{k, 2\ell+1}=p_{2k+1, \ell}$, the second and third terms in Eq.~\eqref{eq:E_YiYj} become identical.
By using the conditional probability
\[
p_{k, 2\ell+1}=\Prob{Y_{n,i}=k, Y_{n,j}=2\ell+1}=\Prob{Y_{n,j}=2\ell+1}\Prob{Y_{n,i}=k\mid Y_{n,j}=2\ell+1},
\]
the second term in Eq.~\eqref{eq:E_YiYj} can be calculated as
\begin{align*}
\sum_{k=0}^S\sum_{\ell=0}^{\lfloor S/2\rfloor}kp_{k, 2\ell+1}
&=\sum_{\ell=0}^{\lfloor S/2\rfloor}\Prob{Y_{n,j}=2\ell+1}\E{Y_{n,i}\mid Y_{n,j}=2\ell+1}\\
&=\sum_{\ell=0}^{\lfloor S/2\rfloor}\Prob{Y_{n,j}=2\ell+1}\frac{n-2\ell-1}{R-1}
=\frac{n}{2R}-\frac{nG(-1)+G'(-1)}{2(R-1)},
\end{align*}
where the conditional mean provided in Proposition~\ref{prop3.4}\ref{prop3.4-a} is applied in the second equality and Eqs.~\eqref{eq:odd} and \eqref{eq:oddsum_kP} are used in the last equality.
The remaining term in Eq.~\eqref{eq:E_YiYj} is
\[
\frac{1}{4}\sum_{k=0}^{\lfloor S/2\rfloor}\sum_{\ell=0}^{\lfloor S/2\rfloor}p_{2k+1, 2\ell+1}\eqqcolon \frac{1}{4}p_{\text{odd},\text{odd}},
\]
where $p_{\text{odd},\text{odd}}$ is the probability that both the numbers of rank-$i$ and rank-$j$ cards in $n$ cards are odd.
Similarly, we introduce $p_{\text{odd},\text{even}}$ and $p_{\text{even},\text{odd}}$.
These three probabilities satisfy the following relations:
\begin{align*}
p_{\text{odd},\text{odd}}+p_{\text{odd},\text{even}}&=\Prob{\text{$Y_{n,i}$ is odd}}=\frac{1-G(-1)}{2},\\
p_{\text{odd},\text{odd}}+p_{\text{even},\text{odd}}&=\Prob{\text{$Y_{n,j}$ is odd}}=\frac{1-G(-1)}{2}.
\end{align*}
According to Proposition~\ref{prop3.4}\ref{prop3.4-b}, $Y_{n,i}+Y_{n,j}$ follows the hypergeometric distribution.
Thus, the following relation can be obtained:
\[
p_{\text{odd},\text{even}}+p_{\text{even},\text{odd}}=\Prob{\text{$Y_{n,i}+Y_{n,j}$ is odd}}=\frac{1-G_2(-1)}{2}.
\]
Therefore,
\[
p_{\text{odd},\text{odd}}=\frac{1-G(-1)}{2}-\frac{1-G_2(-1)}{4}.
\]
All the sums in Eq.~\eqref{eq:E_YiYj} can be calculated, and \ref{itm:lemma1-c} can be proven by substituting these results into Eq.~\eqref{eq:cov_def}.
\end{proof}

Theorems~\ref{thm1} and \ref{thm2} are immediately proven by using Lemma~\ref{lemma1}.

\begin{proof}[Proof of Theorem~\ref{thm1}]
From Lemma~\ref{lemma1}\ref{itm:lemma1-a} and Eq.~\eqref{eq:pgf},
\begin{align}
\E{X_n}&=\sum_{i=1}^R \E{\left\lfloor\frac{Y_{n,i}}{2}\right\rfloor}=\frac{n}{2}-\frac{R}{4}+\frac{RG(-1)}{4}\label{eq:proof_E}\\
&=\begin{dcases}
\frac{n}{2}-\frac{R}{4}+ \frac{R}{4}\frac{\binom{N-S}{n}}{\binom{N}{n}}\GaussF{-n}{-S}{N-S-n+1}{-1} & 0\le n\le\frac{N}{2},\\
\frac{n}{2}-\frac{R}{4}+(-1)^S\frac{R}{4}\frac{\binom{N-S}{N-n}}{\binom{N}{N-n}}\GaussF{n-N}{-S}{n-S+1}{-1} & \frac{N}{2}\le n\le N.
\end{dcases}\nonumber
\end{align}
\end{proof}

\begin{proof}[Proof of Theorem~\ref{thm2}]
The variance of $X_n=\sum_{i=1}^R\lfloor Y_{n,i}/2\rfloor$ can be efficiently calculated by using Lemmas~\ref{lemma1}\ref{itm:lemma1-b} and \ref{itm:lemma1-c}.
\begin{align}
\Var{X_n}&=\sum_{i=1}^R \Var{\left\lfloor\frac{Y_{n,i}}{2}\right\rfloor}+\mathop{\sum_{i=1}^R\sum_{j=1}^R}_{i\ne j}\Cov{\left\lfloor\frac{Y_{n,i}}{2}\right\rfloor}{\left\lfloor\frac{Y_{n,j}}{2}\right\rfloor}\nonumber\\
&=\frac{R}{16}+\frac{R(R-1)}{16}G_2(-1)-\frac{R^2}{16}(G(-1))^2.
\label{eq:proof_V}
\end{align}
By substituting $G$ in Eq.~\eqref{eq:pgf} and $G_2$ in Eqs.~\eqref{eq:G2-1} and \eqref{eq:G2-2}, the proof is complete.
\end{proof}

\section{Asymptotic forms for the mean and variance}
We investigate suitable scaling limits of the mean $\E{X_n}$ and variance $\Var{X_n}$ in $R\to\infty$.
More precisely, we take $R\to\infty$ (i.e., $N=SR\to\infty$) and $n\to\infty$ while keeping $x=n/N$ constant ($0<x<1$).

\begin{lemma}[Asymptotic approximation of the probability-generating function $G(z)$]\label{lemma2}
For $R\to\infty$ and $n\to\infty$ with $x=n/N$ fixed, the probability-generating function of $Y_{n,i}$ is written as
\begin{equation}
G(z)=(1-x+xz)^S-\frac{S(S-1)}{2N}x(1-x)(z-1)^2(1-x+xz)^{S-2}+\Omicron(N^{-2}).
\label{eq:G_asymptotic}
\end{equation}
\end{lemma}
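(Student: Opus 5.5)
Since $S$ is fixed, the expansion of $G(z)=\sum_{k=0}^S z^k\Prob{Y_{n,i}=k}$ in powers of $N^{-1}$ is a finite computation: it is enough to expand each of the $S+1$ probabilities $\Prob{Y_{n,i}=k}=\binom{S}{k}\binom{N-S}{n-k}/\binom{N}{n}$ from Eq.~\eqref{eq:hg_pmf} to order $N^{-1}$, uniformly in $k$. We never need the hypergeometric representation Eq.~\eqref{eq:pgf}, so the split into $n\le N/2$ and $n>N/2$ disappears.

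The first step is to write the ratio as a product of falling factorials,
\[
\frac{\binom{N-S}{n-k}}{\binom{N}{n}}=\frac{n^{\underline{k}}\,(N-n)^{\underline{S-k}}}{N^{\underline{S}}},
\]
where $a^{\underline{m}}=a(a-1)\cdots(a-m+1)$. Then substitute $n=xN$ and expand to first order using
\[
a^{\underline{m}}=a^m\Bigl(1-\frac{m(m-1)}{2a}+\Omicron(a^{-2})\Bigr).
\]
This gives
\[
\Prob{Y_{n,i}=k}=\binom{S}{k}x^k(1-x)^{S-k}\left[1+\frac{1}{2N}\left(S(S-1)-\frac{k(k-1)}{x}-\frac{(S-k)(S-k-1)}{1-x}\right)+\Omicron(N^{-2})\right].
\]
Since $0<x<1$ is fixed and there are only finitely many $k$, the error terms are uniform.

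Next, multiply by $z^k$ and sum over $k$. The leading term gives $(1-x+xz)^S$ by the binomial theorem. For the correction, use the standard identities
\[
\sum_k \binom{S}{k}k(k-1)x^k(1-x)^{S-k}z^k=S(S-1)x^2z^2(1-x+xz)^{S-2},
\]
\[
\sum_k \binom{S}{k}(S-k)(S-k-1)x^k(1-x)^{S-k}z^k=S(S-1)(1-x)^2(1-x+xz)^{S-2}.
\]
These follow by differentiating $(u+v)^S$ twice in $v$ or in $u$, then setting $u=1-x$, $v=xz$. The $N^{-1}$ coefficient is therefore
\[
\frac{S(S-1)}{2N}(1-x+xz)^{S-2}\left[(1-x+xz)^2-xz^2-(1-x)\right].
\]

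The last step, and the only place needing care, is checking that the bracket equals $-x(1-x)(z-1)^2$. Expanding, $(1-x)^2+2x(1-x)z+x^2z^2-xz^2-(1-x)=-x(1-x)+2x(1-x)z-x(1-x)z^2$, which is indeed $-x(1-x)(z-1)^2$. This yields Eq.~\eqref{eq:G_asymptotic}. As a consistency check, the correction vanishes at $z=1$, as it must since $G(1)=1$. The main obstacle is bookkeeping of the $1/(2a)$ corrections, since $a$ equals $n$, $N-n$ or $N$ in the three factors; I would verify the result against the exact $S=2$ case before trusting the signs.
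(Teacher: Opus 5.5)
Your proposal is correct and follows essentially the same route as the paper: write $\Prob{Y_{n,i}=k}$ as a product of falling-factorial ratios, expand each to order $N^{-1}$, and sum against $z^k$ using the binomial generating function and the two second-factorial-moment identities. The differences are cosmetic: you expand the finite products $a^{\underline{m}}$ directly where the paper invokes Stirling's formula (yours is cleaner, since each ratio has a fixed number of factors), and you carry out explicitly the final reduction of the bracket to $-x(1-x)(z-1)^2$, which the paper leaves to the reader.
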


\begin{proof}
The probability mass function of $Y_{n,i}$ in Eq.~\eqref{eq:hg_pmf} is written as
\[
\Prob{Y_{n,i}=k}=\binom{S}{k}\frac{(N-S)!}{N!}\frac{n!}{(n-k)!}\frac{(N-n)!}{(N-n-S+k)!}.
\]
We can use Stirling's approximation
\[
M!=\sqrt{2\pi M}\left(\frac{M}{e}\right)^M\left(1+\frac{1}{12M}+\Omicron(M^{-2})\right)
\]
to obtain
\begin{align*}
\frac{(N-S)!}{N!}&=N^{-S}\left(1+\frac{S(S-1)}{2N}+\Omicron(N^{-2})\right),\\
\frac{n!}{(n-k)!}&=(Nx)^k\left(1-\frac{k(k-1)}{2Nx}+\Omicron(N^{-2})\right),\\
\frac{(N-n)!}{(N-n-S+k)!}&=(N(1-x))^{S-k}\left(1-\frac{(S-k)(S-k-1)}{2N(1-x)}+\Omicron(N^{-2})\right).
\end{align*}
Thus,
\[
\Prob{Y_{n,i}=k}=\binom{S}{k}x^k(1-x)^{S-k}\left(1+\frac{S(S-1)}{2N}-\frac{k(k-1)}{2Nx}-\frac{(S-k)(S-k-1)}{2N(1-x)}+\Omicron(N^{-2})\right).
\]
The $\Omicron(N^{-1})$ terms are necessary to calculate the scaling limit of $\Var{X_n}$.
Ord~\cite{Ord} obtained an approximation of the hypergeometric distribution equivalent to this equation.
If the $N\to\infty$ limit is taken, the hypergeometric distribution converges to the binomial distribution~\cite{Johnson}:
\[
\Prob{Y_{n,i}=k}\to\binom{S}{k}x^k(1-x)^{S-k}.
\]

Then, the probability-generating function $G(z)=\sum_{k=0}^S \Prob{Y_{n,i}=k}z^k$ can be calculated by using the probability-generating function of the binomial distribution
\[
\sum_{k=0}^S \binom{S}{k}x^k(1-x)^{S-k} z^k=(1-x+xz)^S
\]
and the relations
\[
\sum_{k=0}^S \binom{S}{k}k(k-1)x^k(1-x)^{S-k}z^k=S(S-1)x^2 z^2(1-x+xz)^{S-2}
\]
and
\[
\sum_{k=0}^S \binom{S}{k}(S-k)(S-k-1)x^k(1-x)^{S-k}z^k=S(S-1)(1-x)^2(1-x+xz)^{S-2}.
\]
\end{proof}

\begin{theorem}[Asymptotic forms of $\E{X_n}$ and $\Var{X_n}$]\label{thm3}
In the limits $R\to\infty$ and $n\to\infty$ with $x=n/N$ fixed, the mean and variance of $X_n$ scaled by $R$ respectively become
\begin{equation}
\frac{\E{X_n}}{R}\to \frac{S}{2}x-\frac{1}{4}+\frac{(1-2x)^S}{4}\eqqcolon \mathcal{E}(x)
\label{eq:asymptotic_mean}
\end{equation}
and
\begin{equation}
\frac{\Var{X_n}}{R}\to\frac{1}{16}+\frac{S-1}{16}(1-2x)^{2S}-\frac{S}{16}(1-2x)^{2S-2}\eqqcolon\mathcal{V}(x).
\label{eq:asymptotic_var}
\end{equation}
\end{theorem}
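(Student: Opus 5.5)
The plan is to start from the exact reductions established in the proofs of Theorems~\ref{thm1} and \ref{thm2}, namely Eq.~\eqref{eq:proof_E},
\[
\frac{\E{X_n}}{R}=\frac{n}{2R}-\frac14+\frac{G(-1)}{4},
\]
and Eq.~\eqref{eq:proof_V}, and to insert the asymptotic expansion of Lemma~\ref{lemma2}. For the mean, $n/R=Sn/N=Sx$, so the first term equals $Sx/2$. Lemma~\ref{lemma2} at $z=-1$ gives $G(-1)=(1-2x)^S+\Omicron(N^{-1})$. This yields $\mathcal{E}(x)$ directly, and no case split between $n\le N/2$ and $n>N/2$ is needed, because Lemma~\ref{lemma2} is stated for $G$ itself and not for a particular hypergeometric representation.

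For the variance, dividing Eq.~\eqref{eq:proof_V} by $R$ gives
\[
\frac{\Var{X_n}}{R}=\frac1{16}+\frac{R-1}{16}G_2(-1)-\frac{R}{16}(G(-1))^2 .
\]
The last two terms are each of order $R$, so they must cancel to leading order. The $\Omicron(N^{-1})$ corrections, multiplied by $R=N/S$, therefore contribute at order one. This is why the second term in Lemma~\ref{lemma2} is needed, and it is the main point of care.

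Since $Y_{n,i}+Y_{n,j}$ is hypergeometric with $S$ replaced by $2S$ (Proposition~\ref{prop3.4}\ref{prop3.4-b}), the derivation of Lemma~\ref{lemma2} applies verbatim with $2S$ in place of $S$. With $N$ and $x$ unchanged, this gives
\[
G_2(z)=(1-x+xz)^{2S}-\frac{2S(2S-1)}{2N}x(1-x)(z-1)^2(1-x+xz)^{2S-2}+\Omicron(N^{-2}),
\]
and this expansion does not depend on whether $R\ge4$ or $R\le3$, since $R\to\infty$. At $z=-1$, write $u=1-2x$. Then
\[
G_2(-1)=u^{2S}-\frac{4S(2S-1)}{N}x(1-x)u^{2S-2}+\Omicron(N^{-2}),
\]
\[
(G(-1))^2=u^{2S}-\frac{4S(S-1)}{N}x(1-x)u^{2S-2}+\Omicron(N^{-2}).
\]

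Substituting these and using $R/N=1/S$, the leading terms combine as $\frac{R-1}{16}u^{2S}-\frac{R}{16}u^{2S}=-\frac{1}{16}u^{2S}$. The $1/N$ corrections contribute
\[
-\frac{1}{16}\cdot\frac{4(2S-1)}{1}x(1-x)u^{2S-2}+\frac{1}{16}\cdot 4(S-1)x(1-x)u^{2S-2}=-\frac{S}{4}x(1-x)u^{2S-2}.
\]
All remaining terms are $\Omicron(N^{-1})$, and these include the contribution of $R\cdot\Omicron(N^{-2})$.

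Finally, I would use the identity $4x(1-x)=1-u^2$. This gives
\[
-\frac{S}{16}(1-u^2)u^{2S-2}-\frac{1}{16}u^{2S}=\frac{S-1}{16}u^{2S}-\frac{S}{16}u^{2S-2},
\]
which together with the constant $1/16$ is exactly $\mathcal{V}(x)$.

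The only delicate step is confirming that the $\Omicron(N^{-2})$ remainder in Lemma~\ref{lemma2} is uniform enough to be evaluated at $z=-1$ and multiplied by $R$. This holds because $S$ is fixed, so each remainder is a finite sum of $S+1$ or $2S+1$ terms, each with a bounded coefficient at fixed $x$.
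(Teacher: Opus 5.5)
Your proposal is correct and follows the paper's own proof: you substitute Lemma~\ref{lemma2} (and its $S\to 2S$ analogue for $G_2$) into Eqs.~\eqref{eq:proof_E} and \eqref{eq:proof_V}, keeping the $\Omicron(N^{-1})$ terms because they are multiplied by $R$. Your explicit cancellation via $R/N=1/S$ and the identity $4x(1-x)=1-(1-2x)^2$ correctly supplies the algebra that the paper compresses into ``substituting $G(-1)$ and $G_2(-1)$ completes the proof.''
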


\begin{proof}
From Eq.~\eqref{eq:proof_E},
\[
\frac{\E{X_n}}{R}=\frac{n}{2R}-\frac{1}{4}+\frac{G(-1)}{4}\to\frac{S}{2}x-\frac{1}{4}+\frac{(1-2x)^S}{4},
\]
where the leading term of $G(-1)$ is
\[
G(-1)=(1-2x)^S+\Omicron(N^{-1})
\]
from Lemma~\ref{lemma2}.
Calculating $\E{X_n}$ requires only the leading term of $G(-1)$.

Next, $\Var{X_n}/R$ can be obtained by using Eq.~\eqref{eq:proof_V}:
\[
\frac{\Var{X_n}}{R}=\frac{1}{16}+\frac{R-1}{16}G_2(-1)-\frac{R}{16}(G(-1))^2.
\]
The asymptotic form of $G_2(z)$ can be obtained by simply replacing $S$ with $2S$ in Eq.~\eqref{eq:G_asymptotic}:
\[
G_2(z)=(1-x+xz)^{2S}-\frac{2S(2S-1)}{2N}x(1-x)(z-1)^2(1-x+xz)^{2S-2}+\Omicron(N^{-2}).
\]
Substituting $G(-1)$ and $G_2(-1)$ completes the proof.
\end{proof}

The function $\mathcal{E}(x)$ defined in Eq.~\eqref{eq:asymptotic_mean} is monotonically increasing in $0\le x\le1$ and satisfies $\mathcal{E}(0)=1$, $\mathcal{E}'(0)=0$, and $\mathcal{E}(1)=\left\lfloor S/2\right\rfloor$ for any $S$ and
\[
\mathcal{E}'(1)=
\begin{dcases}
S & \text{($S$ is even)},\\
0 & \text{($S$ is odd)}.
\end{dcases}
\]

It is expected that $\E{X_n}/R$ for large (but finite) $R$ becomes close to the asymptotic form of Eq.~\eqref{eq:asymptotic_mean}.
Figure~\ref{fig3} shows the rescaled graph of Fig.~\ref{fig1} with $\E{X_n}/R$ and $n/N(=x)$ for (a) $S=4$ and (b) $S=5$.
The solid curve represents the asymptotic function $\mathcal{E}(x)$.
The points overlap almost perfectly along the curve of $\mathcal{E}(x)$ including small $R$ such as $R=3$ and $5$.
Depending on whether $S$ is even or odd, the curvature of $\mathcal{E}(x)$ near $x\approx1$ changes.
This observation is generally justified by
\[
\mathcal{E}''(1)=(-1)^S S(S-1).
\]
In other words, $\mathcal{E}(x)$ near $x=1$ is concave upward for even $S$ and concave downward for odd $S$.

\begin{figure}[t!]\centering
\raisebox{3.9cm}{\small{(a)}}
\includegraphics[scale=0.8]{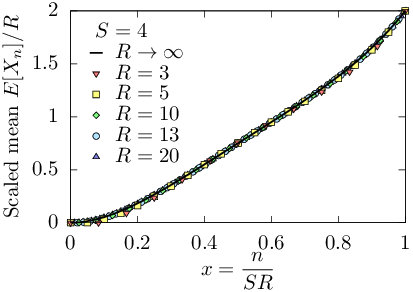}
\hspace{5mm}
\raisebox{3.9cm}{\small{(b)}}
\includegraphics[scale=0.8]{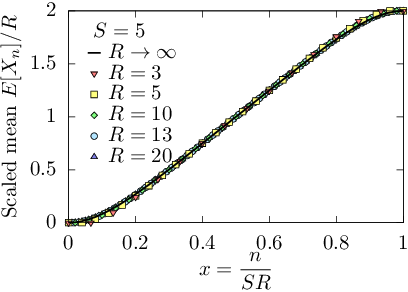}
\caption{
Graph of $\E{X_n}/R$ as a function of $x=n/N$ for (a) $S=4$ and (b) $S=5$.
The solid curve represents the asymptotic form $\mathcal{E}(x)$ given in Eq.~\eqref{eq:asymptotic_mean}.
}
\label{fig3}
\end{figure}

The function $\mathcal{V}(x)$ defined in Eq.~\eqref{eq:asymptotic_var} is symmetric with respect to $x=1/2$ (i.e., $\mathcal{V}(1/2+x)=\mathcal{V}(1/2-x)$).
This function satisfies $\mathcal{V}(0)=\mathcal{V}(1)=0$, $\mathcal{V}(1/2)=1/16$, and $\mathcal{V}'(0)=\mathcal{V}'(1)=\mathcal{V}'(1/2)=0$ for any $S$.
The maximum of $\mathcal{V}(x)$ in $0\le x\le1$ is attained at $x=1/2$.

Figure~\ref{fig4} shows a rescaled graph of Fig.~\ref{fig2} with $\Var{X_n}/R$ and $n/N(=x)$ for (a) $S=4$ and (b) $S=5$.
$\mathcal{V}(x)$ is shown by the solid curve.
Points lie close to the curve of $\mathcal{V}(x)$.
Compared to Fig.~\ref{fig3}, however, points for $R=3$ and $5$ exhibit a relatively large deviation from $\mathcal{V}(x)$.

\begin{figure}[t!]\centering
\raisebox{3.9cm}{\small{(a)}}
\includegraphics[scale=0.8]{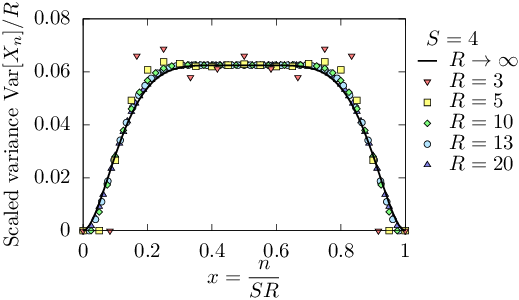}
\hspace{5mm}
\raisebox{3.9cm}{\small{(b)}}
\includegraphics[scale=0.8]{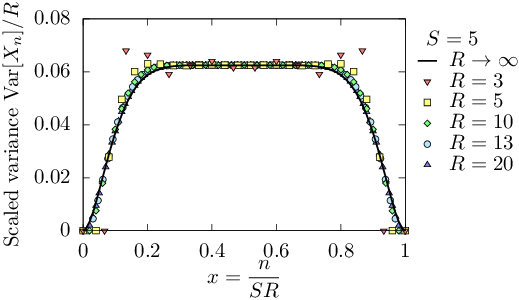}
\caption{
Graph of $\Var{X_n}/R$ as a function of $x=n/N$ for (a) $S=4$ and (b) $S=5$.
The solid curve represents the asymptotic form $\mathcal{V}(x)$ in Eq.~\eqref{eq:asymptotic_var}.
}
\label{fig4}
\end{figure}

\bibliographystyle{apsrev4-2}
\bibliography{bibs}

\end{document}